%% file: main.tex
\documentclass[prb]{revtex4-2}

\usepackage{color}

\usepackage{hyperref}
\usepackage{float}
\usepackage{comment}
\definecolor{eggplant}{RGB}{126,93,181}
\definecolor{cayenne}{RGB}{148,17,0}
\definecolor{teal}{RGB}{0,145,147}
\definecolor{blueberry}{RGB}{4,51,255}

\usepackage{graphicx}

\usepackage{amsmath}     
\usepackage{amsthm}
  \usepackage{cleveref}	
  	\crefname{figure}{Fig.}{Figs.}
  	\crefname{table}{Table}{Tables}
  	\crefname{equation}{Eq.}{Eqs.}
  	\crefname{section}{Section}{Sections}
  	\crefname{subsection}{Section}{Sections}
  	\crefname{subsubsection}{Section}{Sections}
  	\newcounter{algorithm}
\crefname{algorithm}{Algorithm}{Algorithms}
\Crefname{algorithm}{Algorithm}{Algorithms}

  \usepackage{amssymb}
  \usepackage{bm}
  \usepackage{mathrsfs}
  \usepackage{titlesec}
  \usepackage{subcaption}
\usepackage{mdframed}
  \usepackage[noend]{algpseudocode}
\newtheorem{definition}{Definition}[section]
\newtheorem{proposition}[definition]{Proposition}

\newtheorem{theorem}[definition]{Theorem}
\newtheorem{lemma}[definition]{Lemma}

\date{\today}

\begin{document}

\author{Israa Fakih$^{*}$}
\author{Laura Grigori$^{*}$}
\author{Karl Pierce$^{\dagger}$}
\affiliation{$^{*}$ PSI Center for Scientific Computing, Theory and Data, Villigen PSI; Institute of Mathematics, EPFL, Switzerland}
\affiliation{$^{\dagger}$ The University of Maryland, College Park, College Park MD 20742 USA}
\title{Accelerating the Canonical Polyadic Alternating Least Squares Optimization via a Randomized Interpolative Decomposition}

\maketitle

\section*{Abstract}
We present a novel leverage score-based sampling strategy for the randomized alternating least squares optimization (ALS) of the canonical polyadic decomposition (CPD-ALS).
Unlike previous strategies, we  determine row-wise samples for the CPD-ALS problem from the leverage scores of the tensor which is being decomposed (denoted the target tensor).
We demonstrate that, when rows are sampled according to the leverage score distribution of the matricized target tensor, each least squares subproblem of the CPD-ALS problem achieves $(1+\epsilon)-$relative accuracy in the residual norm with probability at least $1-\delta$ using a sampling $s=\frac{R\gamma}{\beta} \max\left(\frac{4}{\delta \epsilon}, \frac{144\ln(2R/\delta)}{\epsilon_{0}^{2}}\right)$, where $\epsilon_{0}$ is a constant, $\beta$ is leverage score's approximation constant, $R$ is the target rank and $\gamma$ captures the coherence between the Khatri–Rao product (KRP) of the CPD factor matrices and the exact KRP; $\gamma$ decreases as the ALS iterates converge. 
To efficiently approximate the leverage score distribution for each matricization of the target tensor without explicitly computing leverage scores we employ a strong rank-revealing QR (sRRQR) factorizations.
By construction, this QR-based leverage score sampling method outperforms previously published schemes as it does not, in principle, require the resampling of the target tensor or recomputing the leverage scores of the KRP, minimizing the computational and storage overhead of the CPD-ALS procedure.
Furthermore, because the approach does not depend on computing the leverage scores of individual factor matrices, the method is still effective when the CP rank is greater than the dimension of any one mode of the target tensor. 
To address the high computational cost of the CP-sRRQR on a higher-order matricized tensor, we use a recently developed sparse embedding-based sRRQR strategy called SE-QRCS.
We demonstrate the effectiveness of this novel approach on a collection of synthetic and real tensors and compare the results to the current state-of-the-art sampled CPD-ALS strategy.

\input{Introduction}
\input{Preliminaries}
\input{Proposed_method}
\input{Theoretical_Results}
\input{Computational_Experiments}
\input{Conclusion}

\section{Acknowledgments}
    This work was supported by the University of Maryland, College Park. 
    
\bibliography{kmprefs,bibliography}

\end{document}

%% file: Introduction.tex
\section{Introduction}
Tensors (multi-dimensional arrays) are essential computational structures necessary in applications such as computational modeling and multi-way data analysis. 
It is well understood that tensor-based algorithms are plagued by the ``curse of dimensionality". 
To address the curse of dimensionality, decompositions in tensor formats such as the canonical polyadic decomposition (CPD) can be used \cite{hitchcock1927expression,carroll1970analysis,harshman1970foundations}.
Unfortunately, the algorithms used to optimize the CPD are, too, plagued by the curse of dimensionality.
This is especially prevalent in the popular alternating least-squares CPD (CPD-ALS) \cite{VRG:kroonenberg:1980:P,VRG:beylkin:2002:PNAS} optimization strategy, which iteratively solves a set of highly overdetermined least-squares problems.
Here, we present our research into the application of randomized and interpolative methods to reduce the computational and storage complexity of the CPD-ALS optimization procedure.

The CPD represents an order-$N$ tensor as a sum of $R$ rank-one component tensors.
For example, the CPD of the target tensor $\mathcal{T}\in \mathbb{R}^{n_1 \times n_2 \times n_3}$ is 
\begin{equation}
    \label{eq:CPD_ex}
    \mathcal{T} = [\![ \mathbf{F}_1^{*}, \mathbf{F}_2^{*},\mathbf{F}_3^{*} ] \!], 
\end{equation}
where $\{\mathbf{F}_1^{*}, \mathbf{F}_2^{*}, \mathbf{F}_3^{*}\}$ denote the CP factor matrices such that $\mathbf{F}^{*}_i \in \mathbb{R}^{n_{i} \times R_\mathrm{CP}}$, where $n_{i}$ is the dimension of the $i$th mode of $\mathcal{T}$ and $R_\mathrm{CP}$ is the CP rank. 
The CP rank is defined as the fewest number of rank-one components which satisfies the equality in \eqref{eq:CPD_ex}.
Currently, there is no straightforward algorithm to determine the CP rank of a tensor.
Therefore, the goal of CPD optimization is to approximate the tensor $\mathcal{T}$
using $R$ rank-one component tensors.

The CPD-ALS of an order-$N$ tensor is formulated by splitting the non-linear CPD optimization problem into $N$, linear least-squares (LS) problems, one for each CP factor matrix.
These $N$ LS problems are solved successively until some global convergence criterion is achieved.
For example, given an arbitrary set of rank-$R$ factor matrices $\{{\bf F}_1, {\bf F}_2, \dots {\bf F}_N\}$ and an order-$N$ target tensor $\mathcal{T}\in \mathbb{R}^{n_{1}\times n_{2}\times \dots \times n_{N}}$, the LS optimization of the $k$-th mode can be formulated as follows:
\begin{align}\label{eq:ls_orig}
    \min_{\mathbf{F}_{k}} \| 
    \mathbf{W}_{k}(\mathbf{F}_{k})^T - \mathbf{T}_{k}^T \|^2_F,
\end{align}
where $\mathbf{W}_{k} = \mathbf{F}_{N} \odot \mathbf{F}_{N-1} \odot \dots \mathbf{F}_{k+1} \odot \mathbf{F}_{k-1} \odot \dots \mathbf{F}_{1} \in \mathbb{R}^{m\times R}$ is the Khatri-Rao product (KRP) of all factors but $\mathbf{F}_{k}\in \mathbb{R}^{n_{k}\times R}$ and $\mathbf{T}_{k} \in \mathbb{R}^{n_{k}\times m}$ is the $k-$th mode matricization of the target tensor $\mathcal{T}$ with $m=\prod_{i\neq k}^{N}n_{i}$.
Because, in general, $n_{k}\ll m$, the LS problem \eqref{eq:ls_orig} is tall and skinny. 
As this is true for every LS subproblem, acceleration can be achieved by sketching each LS problem with a sampling matrix $\mathbf{S}_k\in \mathbb{R}^{s\times m}$ and solving each sketched LS problem, that is,
\[
    \min_{\mathbf{F}_k} \| 
    \mathbf{S}_k \mathbf{W}_{k} (\mathbf{F}_{k})^{T} - \mathbf{S}_k\mathbf{T}_{k}^{T} \|^2_F.
\]
Recently, Larsen et al.~\cite{larsen2022practical} and Battaglino et al.~\cite{battaglino2018practical} introduced a randomized CPD-ALS algorithm that defines $\bf{S}_k$ as a row-wise sampling matrix.
In their work, non-zero entries of $\bf{S}_k$ are determined by taking the row-wise leverage scores of each KRP as a sampling distribution.
Because forming and computing the leverage scores of the full KRP is prohibitively expensive, Battaglino et. al. take advantage of the tensor product structure of the KRP to approximate the leverage scores as a Kronecker product of the leverage scores of each component factor matrix.
Therefore, samples of the KRP can be efficiently determined by sampling the probability distribution of each component factor matrix independently.
More recently, efficient KRP-based sampling schemes have been further refined by Bharadwaj et al.~\cite{Bharadwaj:2023:NURIPS}.

Although this KRP-based method is an efficient sampling scheme, the approach has several downsides. 
First, the KRP of each LS subproblem changes after any single LS update; therefore, the leverage scores for each factor matrix must be recomputed many times during the ALS optimization procedure.
Second, because the leverage score distribution changes throughout the optimization, the components of each LS subproblem (both target tensor and KRP) must be resampled every LS update.
As one might expect, in the limit of large or complicated target tensors,
resampling becomes the bottleneck of an efficient sampled CPD-ALS optimization.
With this leverage score-based sampling method, the target tensor $\mathcal{T}$ is sampled $\mu N$ times, where $\mu$ is the number of ALS iterations, and $N$ is the order of $\mathcal{T}$.
Finally, $R_{CP}\gg n_{k}$, for some $k$,  the leverage score-based probability distribution for the factor matrix $\mathbf{F}\in \mathbb{R}^{n_{k}\times R}$ becomes a uniform distribution, making KRP-based sampling ineffective \cite{drineas2012fast}. 

To address these issues, in this work, we introduce an improved strategy to determine the sampling matrices for CPD-ALS.
With this strategy, our objective is to efficiently compute a single sampling matrix for each LS subproblem which is fixed for the entire ALS procedure; thus reducing the number of times $\mathcal{T}$ must be sampled from $\mu N$ to $N$.
Furthermore, because we do not require $\mathcal{T}$ to be resampled during the ALS procedure, we are free to remove the tensor from local memory and develop an efficient matrix-free CPD-ALS algorithm. 
To achieve this goal, we attempt to identify the positions of the most strongly correlated rows of the best, formally-unknown, KRP associated with the $k$-th mode's LS problem using the column-wise pivoted QR
\[
    \mathbf{T}_{k} \mathbf{\Pi}_{k} = \mathbf{Q}_{k} \mathbf{R}_{k}.
\]
In principle, this QR carries the same cost as a single standard LS updated.
However, practically, we may recognize that this QR factorization, much like the LS problem, is significantly overdetermined, and, therefore, we may leverage randomized linear algebra strategies.
In the following we introduce our (randomized) pivoted QR based leverage score sampling scheme and prove in Theorem \ref{main theorem} that one can efficiently approximate the ALS loss functions using samples determined from the fixed, matricized target tensor in each LS subproblem. The number of samples depends on a parameter $\gamma$ that quantifies the discrepancy between the leverage scores of the KRP and those of the matricized tensor $\mathbf{T}_{k}$.


%% file: Preliminaries.tex
\section{Preliminaries}
In this section, we provide pertinent background for tensor operations, CPD factorization, and alternating least squares algorithm, leverage scores and the sparse embedding-based rank revealing QR (SE-QRCS).
Throughout this paper, tensors are denoted by bold calligraphic letters (e.g., $\mathcal{A}$) matrices by bold capital letters (e.g., $\mathbf{A}$) vectors by bold lowercase letters (e.g., $\mathbf{v}$) and scalars by lowercase letters ( e.g., $a$). 
Given a matrix $\mathbf{A}\in \mathbb{R}^{n\times m}$, we denote the $i^{th}$ row of $\bf{A}$ as $\mathbf{a}_{i,:}$ and the $j^{th}$ column of $\bf{A}$ as $\mathbf{a}_{:,j}$.

\subsection{Tensor Operations and Contractions}
A tensor $\mathcal{T}\in \mathbb{R}^{n_{1}\times n_{2}\times\dots \times n_{N}}$ is a multidimensional array of order $N$, and each $n_{k}$ denotes the size of the tensor along the mode $k$. The mode-$k$ unfolding or matricization of  $\mathcal{T}$ rearranges the higher-order tensor into a matrix $\mathbf{T}_{k}\in \mathbb{R}^{n_{k}\times m}$, where $m=\prod_{i\neq k}n_{i}$, such that 
$\mathbf{T}_{k}(i_{k},j)=\mathcal{T}(i_{1},i_{2},\dots,i_{N})$ with 
\[j=1+\sum_{n\neq k}(i_{n}-1)J_{n} \quad \text{where} \quad J_{n}=\prod_{\substack{d=1\\ d \neq k}}^{n-1} n_d.\]
Given two matrices $\mathbf{A}\in \mathbb{R}^{n\times m}$ and $\textbf{B}\in \mathbb{R}^{l\times k}$, the Kronecker product $\mathbf{A}\otimes \mathbf{B}\in \mathbb{R}^{nl\times mk}$ is defined as
\[\mathbf{A}\otimes \mathbf{B} = \begin{bmatrix}
    a_{11}\textbf{B} & a_{12}\textbf{B} & \dots &a_{1m}\textbf{B}\\
    \vdots & \vdots & \ddots & \vdots\\
    a_{n1}\textbf{B} & a_{n2}\textbf{B} & \dots & a_{nm}\textbf{B}
\end{bmatrix}.\]
In the case where $m=k$, the Khatri- Rao product $\mathbf{A}\odot \mathbf{B}\in \mathbb{R}^{nl\times m}$ is defined as the column-wise Kronecker-product of the two matrices
\[\mathbf{A} \odot \mathbf{B} = \begin{bmatrix}
    \mathbf{a}_{:,1}\otimes \mathbf{b}_{:,1} & \mathbf{a}_{:,2}\otimes \mathbf{b}_{:,2}  & \dots & \mathbf{a}_{:,m}\otimes \mathbf{b}_{:,m}
\end{bmatrix}.\]
Finally, in the case where $m=k$ and $n=l$, the Hadamard product, $\textbf{A}\circledast \mathbf{B}\in \mathbb{R}^{l\times m}$ is the elemental multiplication of the entries  of $\textbf{A}$ and $\textbf{B}$.

The canonical polyadic decomposition (CPD) aims to approximate an order-$N$ target tensor $\mathcal{T}$ as the sum of $R$ rank$-1$ tensors \cite{hitchcock1927expression,carroll1970analysis,harshman1970foundations,kolda2009tensor}]
\begin{equation}
\label{CPD-equation}
    \mathcal{T} \approx \mathcal{\tilde{T}}= \sum_{r=1}^{R}\mathbf{f}^{r}_{1}\circ \mathbf{f}^{r}_{2}\dots \circ \mathbf{f}^{r}_{N},
\end{equation}
where $\mathbf{f}^{r}_{k}\in \mathbb{R}^{n_{k}}$ and $\circ$ is the outer vector product. 
By defining the CPD factor matrices as $\mathbf{F_{k}}=\begin{bmatrix}
    \mathbf{f}^{1}_{k} &  \mathbf{f}^{2}_{k} \dots  \mathbf{f}^{R}_{k}
\end{bmatrix}\in \mathbb{R}^{n_{k}\times R}$, the decomposition can be written component-wise as 
\[
\tilde{t}_{i_1,i_2,\dots,i_N} =\sum_{r=1}^{R} \lambda_r \, \mathbf{F}^{i_1 r}_1 \, \mathbf{F}^{i_2 r}_2 \dots \mathbf{F}^{i_N r}_N,
\]
where $\mathbf{F}^{i_{k}r}_{k}$ is the $(i_{k},r)$ entry of $\mathbf{F}_{k}$ and $\lambda$ enforces column-wise normalization of the factors.
Using the CPD factor matrix representation, one can express the mode-$k$ unfolding of $\tilde{\mathcal{T}}$ as
\begin{equation*}
    \mathbf{\tilde{T}}_{k} = \mathbf{F}_{k}\mathbf{W}_{k}^{T} \quad \text{ where} \quad \mathbf{W}_{k}=\mathbf{F}_{1}\odot\mathbf{F}_{2}\odot\dots\odot\mathbf{F}_{k-1}\odot\mathbf{F}_{k+1}\odot\dots\mathbf{F}_{N}
\end{equation*}
The CP rank is defined as the smallest number of rank-1 components such that \eqref{CPD-equation} is an equality.
We make the distinction that, because there is no closed-form equation to determine the CP rank, the value $R$ is simply called the rank of the approximation and not the CP rank.
\subsection{Alternating Least Squares Algorithm}
One of the standard methods for optimizing a collection of CPD factor matrices is through a set of linear LS problems.
These LS problems are formulated by assuming that all but one factor matrix, $\mathbf{F}_{k}$, are fixed.
$\mathbf{F}_{k}$ is then obtained by solving
\begin{equation}
    \label{ALS_update}
    \mathbf{F}_{k} = \arg\min_{\mathbf{F}_{k}}\left\|\mathbf{W}_{k}\mathbf{F}_{k}^{T}-\mathbf{T}_{k}^{T}\right\|_F^2.
\end{equation}
This LS problem can be solved using a QR factorization to form the pseudoinverse of $\mathbf{W}_{k}$, 
\[
\mathbf{F}_k^T = [\mathbf{W}_k]^{\dagger} \mathbf{T}_k^T.
\]
However, this approach is typically impractical due to the cost associated with forming $\mathbf{W}_k$ and  computing its pseudoinverse. Practically, a better method is to form the normal equation 
\[
\mathbf{W}_k^{T} \mathbf{W}_k \mathbf{F}_k^T = \mathbf{W}_k^{T} \mathbf{T}^{T}_k.
\]
The KRP structure can then be utilized to efficiently form the Grammian ($\mathbf{W}_k^T\mathbf{W}_k$) and the matricized tensor times KRP (MTtKRP) ($\mathbf{W}_k^T \mathbf{T}^T_k$).
With the normal equations, one has the advantage of inverting the square, symmetric Grammian matrix, however, this method also squares the condition number of the problem.

The collection of LS problems is iteratively solved until a convergence criterion is satisfied.
One standard way to assess the CPD accuracy is via the fit metric between the target tensor $\mathcal{T}$ and the approximated tensor $\mathcal{\tilde{T}}$, defined as 
\[
\mathrm{Fit}(\mathcal{T}, \mathcal{\tilde{T}}) = 1 - \frac{\|\mathcal{T} - \mathcal{\tilde{T}}\|_F}{\|\mathcal{T}\|_F}
\]
where $\|\mathcal{T}\|_F = \sqrt(\sum^{n_1,n_2,\dots n_K}_{i,j,...,n} t_{i,j,...,n}^2)$. The fit function has an expected range between $[0,1]$, values outside this range suggest a failure in the ALS optimization procedure.

\subsection{Leverage Scores and Coherence}
For a given matrix $\mathbf{W}\in \mathbb{R}^{m\times n}$ with $m\gg n$, let $\mathbf{U}\in \mathbb{R}^{m \times n}$ be an orthogonal basis for the column space of $\mathbf{W}$. The row leverage score of $\mathbf{W}$ \cite{drineas2012fast} gives the importance of each row and is defined by 
\begin{equation*}
    l_{i}(\mathbf{W}) = \|\mathbf{u}_{i,:}\|_{2}^{2} \quad \text{for all } \quad i\in \{1,\dots,m \}.
\end{equation*}
The coherence of a matrix $\mathbf{W}$ is defined as its maximum leverage score. A high coherence indicates that a certain number of rows are highly important in the sense that they contribute significantly more than the others. In this case, the matrix is said to be coherent. However, when the coherence is low, we say that the matrix is incoherent.
\subsection{SE-QRCS Factorization}
Given a matrix $\mathbf{A}\in \mathbb{R}^{n\times m}$ with $n\ll m$ and $1\leq k\leq n$, its partial QR factorization with column pivoting is 
\[
\mathbf{A}\mathbf{\Pi} = \mathbf{Q}\begin{pmatrix}
\mathbf{R}_{11} & \mathbf{R}_{12}\\
 & \mathbf{R}_{22}
\end{pmatrix}
\]
where $\mathbf{\Pi}\in\mathbb{R}^{m\times m}$ is the permutation matrix and $\mathbf{R}_{11}\in \mathbb{R}^{k\times k}$. This factorization is said to satisfy the strong rank-revealing QR properties (sRRQR) \cite{gu1996efficient} if the permutation matrix $\mathbf{\Pi}$ rearranges the columns of $\mathbf{A}$ such that $\mathbf{R}_{11}$ is well conditioned and its singular values approximate the leading $k$ singular values of $\mathbf{A}$, $\mathbf{R}_{22}$ has small norm and its singular values approximate the remaining $n-k$ singular values of $\mathbf{A}$ and the entries of $|\mathbf{R}_{11}^{-1}\mathbf{R}_{12}|$ are bounded.

The number of floating point operations required to compute this factorization is $O(mn^{2})$, which becomes extremely expensive when $m$ is very large. To address this problem, randomization techniques have been used to first embed the high-dimensional space into a lower-dimensional one while approximately preserving its geometry, such as vector norms and scalar products of vectors, up to a distortion $\epsilon$ and with high probability. We first recall the definition of the $\epsilon-$subspace embedding property and present sparse embeddings as an example.
\begin{definition}[$\epsilon-$subspace embedding \cite{woodruff2014sketching}]
Let $\epsilon\in (0,1)$. A sketching matrix $\mathbf{\Omega}\in \mathbb{R}^{l\times m}$ is an $\epsilon-$subspace embedding for a vector space $W\subset \mathbb{R}^{m}$ if with high probability, for any two vectors, $\mathbf{x},\mathbf{y}\in W$
\[|\langle x,y\rangle-\langle\mathbf{\Omega} x,\mathbf{\Omega}y\rangle|\leq \epsilon\|\mathbf{x}\|\|\mathbf{y}\|.\]
\end{definition}
The sketching matrix $\mathbf{\Omega}\in \mathbb{R}^{l\times m}$ is said to be an $(\epsilon,\delta,d)-$Oblivious subspace embedding (OSE) if it satisfies the $\epsilon-$subspace embedding property for any fixed $d-$ dimensional subspace $W\subseteq\mathbb{R}^{m}$ with probability at least $1-\delta$. Sparse subspace embeddings are sparse sketching matrices that satisfy the $(\epsilon,\delta,d)-$OSE property. An example of such embedding is the oblivious sparse norm-approximating projection (OSNAP), introduced in \cite{nelson2013osnap}. One way to construct an OSNAP is as follows: for a fixed sparsity parameter $\tau\geq 1$, for each column $\mathbf{\Omega}_{:,j}$, choose $\tau$ nonzero entries uniformly at random and assign each a value from $\{\frac{-1}{\sqrt{\tau}},\frac{1}{\sqrt{\tau}}\}$, also uniformly at random. 

Although there exist a number of randomized QRCP algorithms \cite{martinsson2017householder},\cite{duersch2017randomized},\cite{xiao2017fast},\cite{grigori2025randomized}, we use the SE-QRCS algorithm, introduced in \cite{fakih2025efficient}, as it is specifically designed for matrices with a large number of columns. 
Let $\mathbf{A}\in \mathbb{R}^{n\times m}$, $n\ll m$ and $\mathbf{\Omega}\in \mathbb{R}^{l\times m}$, where $l$ is the embedding dimension. Assume that $\mathbf{\Omega}$ is a $\tau$-sparse embedding for $\mathrm{range}(\mathbf{A}^{T})$. The SE-QRCS algorithm consists of the following four steps: 
\begin{itemize}
    \item \textbf{Step1 (Sketching) }The  matrix $A$ is sketched on the right by $\Omega^{T}$ to get $\mathbf{A}_{sk}=\mathbf{A}\mathbf{\Omega}^{T}$.
    \item \textbf{Step2 (Pivot selection from sketch) } A strong rank revealing QR is applied to the sketched matrix $\mathbf{A}_{sk}$ 
\[\mathbf{A}_{sk}\mathbf{\Pi}_{sk}=\mathbf{Q}\begin{pmatrix}
    \mathbf{R}_{11}^{sk} & \mathbf{R}_{12}^{sk}\\
     & \mathbf{R}_{22}^{sk}
\end{pmatrix}\]
with $\mathbb{R}_{11}^{sk}\in \mathbb{R}^{k'\times k'}$, where $k'\geq k$.
Interestingly, the method still works well in practice if $k'$ is slightly smaller than $k$.
\item  \textbf{Step3 (Mapping pivots back to $\mathbf{A}$)} The pivot indices selected from $\mathbf{A}_{sk}$ are mapped back to the original matrix $\mathbf{A}$ to  form
\[\mathbf{A}_{sub}=\begin{bmatrix}
    \mathbf{A}_{sub}^{1} & \mathbf{A}_{sub}^{2}
\end{bmatrix}\]
where $\mathbf{A}_{sub}^{1}\in \mathbb{R}^{n\times p}$, the reduced column set consists of the columns of $\mathbf{A}$ corresponding to the selected pivots, and $\mathbf{A}_{sub}^{2}$ contains the remaining columns.
\item \textbf{ Step4 (Final pivot selection)} A strong rank revealing QR is applied to $\mathbf{A}_{sub}^{1}$ to obtain the final pivots $\mathbf{\Pi}$.
\end{itemize}
The expected value of the number of columns $p$ in the reduced column subset matrix $\mathbf{A}_{sub}^{1}$ is shown in \cite{fakih2025efficient} to be 
$$\mathbb{E}(p)\approx m\left[1-\left(1-\frac{k}{l}\right)^{\tau}\right].$$

As the size of the least squares problem \eqref{ALS_update} increases with the size and order of the  tensor $\mathcal{T}$, previous work has focused on reducing computational cost through randomization. In particular, the problem is sketched by sampling certain rows from the Khatri-Rao product $\mathbf{W}_{k}$ and the matricized tensor $\mathbf{T}_{k}^{T}$ as in \cite{battaglino2018practical}, where the sampled rows are chosen uniformly at random. However, since the number of required random samples depends on the coherence of the matrix, this approach works well when the factor matrices are incoherent but may not converge when they are coherent or require a large number of sampled rows.  This can be solved by applying a fast Fourier transform to mix the rows. However, this can lead to 
complex-valued factors even when the tensor has real entries. To mitigate this, a pre-processing step (mixing) 
and a post-processing step (unmixing) are applied, which becomes expensive. Another approach proposed by Larsen et al. \cite{larsen2022practical}, samples rows based on the leverage scores of the Khatri-Rao product $\mathbf{W}_{k}$. As the leverage scores of this product can be upper bounded by the product of the leverage scores of the individual factor matrices \cite{cheng2016spals, battaglino2018practical}, the method estimates and samples according to these factor-level leverage scores at each iteration. Although the algorithm performs well in practice, it has an important limitation: the sampled rows should be computed at the beginning of each iteration which is computationally expensive. Also it treats factor matrices independently and, therefore, cannot capture cases where certain factor matrices become significantly more important when considered jointly. Moreover, the method requires the CP rank to be smaller than the dimension of the factor matrices; otherwise, its leverage score estimates become ineffective, leading to random uniform sampling.

%% file: Proposed_method.tex
\section{CP-sRRQR Method}

In this work, unlike previous approaches, we sample $s$ rows from each LS subproblem \eqref{ALS_update} of the CPD-ALS by applying a randomized pivoted QR strategy on each matricized target tensor $\mathbf{T}_{k}$ rather than sampling based on the KRP $\mathbf{W}_{k}$.

\refstepcounter{algorithm}
\begin{figure*}[t]
\begin{mdframed}
\small

\textbf{Algorithm \thealgorithm:}

\begin{algorithmic}[1]

\State \textbf{Input:}
Tensor $\mathcal{T}\in\mathbb{R}^{n_1,\dots,n_N}$, rank $R$, maximum iterations
$t_{\max}$, sample sizes $\mathbf{s}$, tolerance $\epsilon$

\State \textbf{Output:}
Factor matrices $\{\mathbf{F}_1,\dots,\mathbf{F}_N\}$

\For{$k=1:N$}

    \State $s_k=\mathbf{s}(k)$
    \Comment{Extract number of samples for mode $k$}

    \State $(\mathbf{Q}_k,\mathbf{R}_k,\mathbf{\Pi}_k)
    =
    \mathrm{SE\text{-}QRCS}(\mathbf{T}_k)$
    \Comment{Apply randomized sRRQR}

    \State $\alpha_k=
    \left|\left\{i:\left|R_k(i,i)\right|\geq\epsilon\right\}\right|$

    \If{$s_k\leq\alpha_k$}

        \State $\mathbf{\Pi}_{s_k}
        =
        \mathbf{\Pi}_{:,1:s_k}$

    \Else

        \State $\mathbf{\Pi}_{s_k}
        =
        [
        \mathbf{\Pi}_{:,1:\alpha_k},
        \mathbf{\Pi}_{:,\sigma(\alpha_k+1:\mathrm{end})[1:s_k-\alpha_k]}
        ]$

        \Comment{Uniformly sample remaining columns}

    \EndIf

\EndFor

\For{$t=1:t_{\max}$}

    \For{$k=1:N$}

        \State $\mathbf{S}_k\mathbf{T}_k^T
        =
        \mathbf{\Pi}_{s_k}^{T}\mathbf{T}_k^T$

        \Comment{Compute sketched tensor}

        \State
        $\mathbf{F}_k=
        \arg\min_{\mathbf{F}}
        \|
        \mathbf{S}_k\mathbf{W}_k\mathbf{F}^{T}
        -
        \mathbf{S}_k\mathbf{T}_k^T
        \|_F^2$

        \Comment{Solve least squares problem}

    \EndFor

\EndFor

\end{algorithmic}

\end{mdframed}
\caption{ CPD-sRRQR Algorithm using the randomized sRRQR factorization, SE-QRCS.}
\label{Algorithm1}

\end{figure*}

In this section, we first provide a motivation for introducing the pivoted QR and, subsequently, we provide an efficient pivoted QR-based sampling algorithm for the CPD-ALS presented in Algorithm~\ref{Algorithm1}.

\begin{figure}[t!]
    \begin{subfigure}{0.5\textwidth}
        \includegraphics[width=0.8\columnwidth]{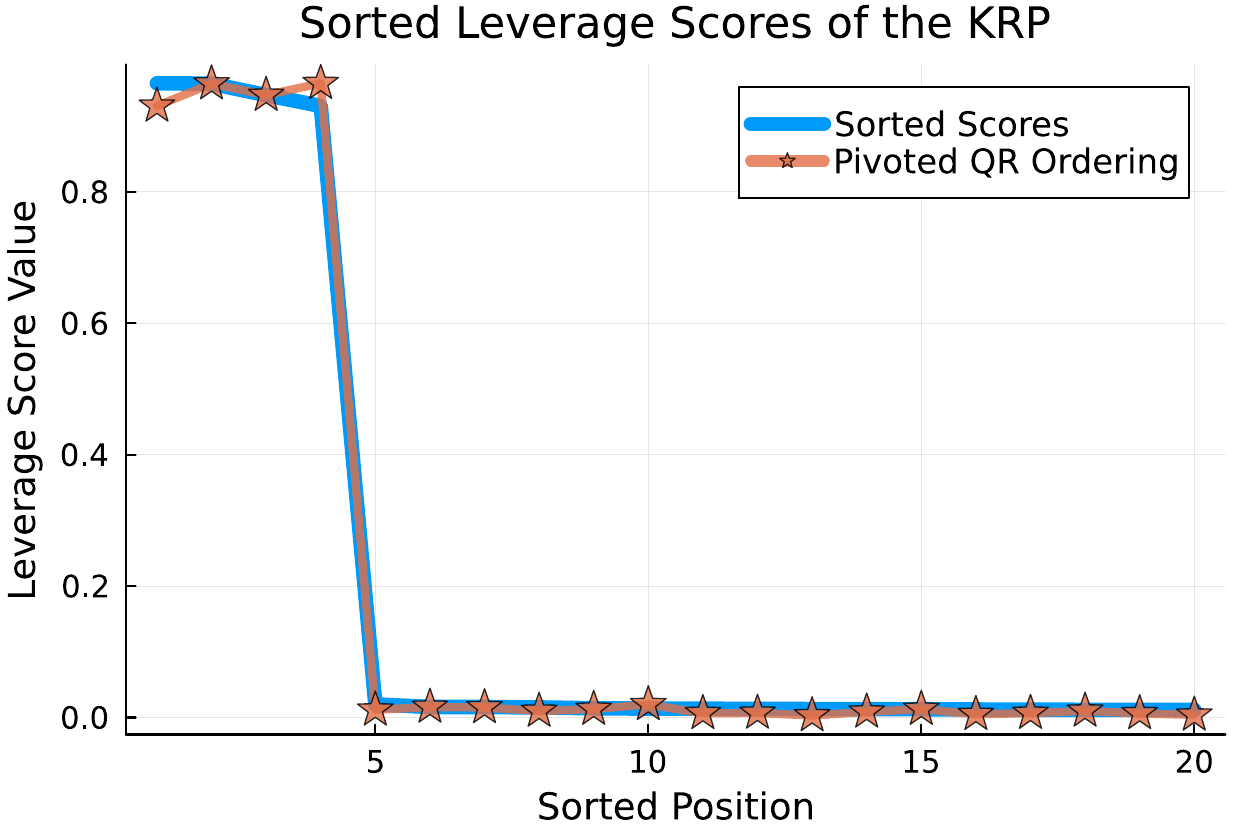}
        \caption{}
        \label{fig:AbsEnErr1}
    \end{subfigure}\hfill
    \begin{subfigure}{0.49\textwidth}
        \includegraphics[width=.8\linewidth]{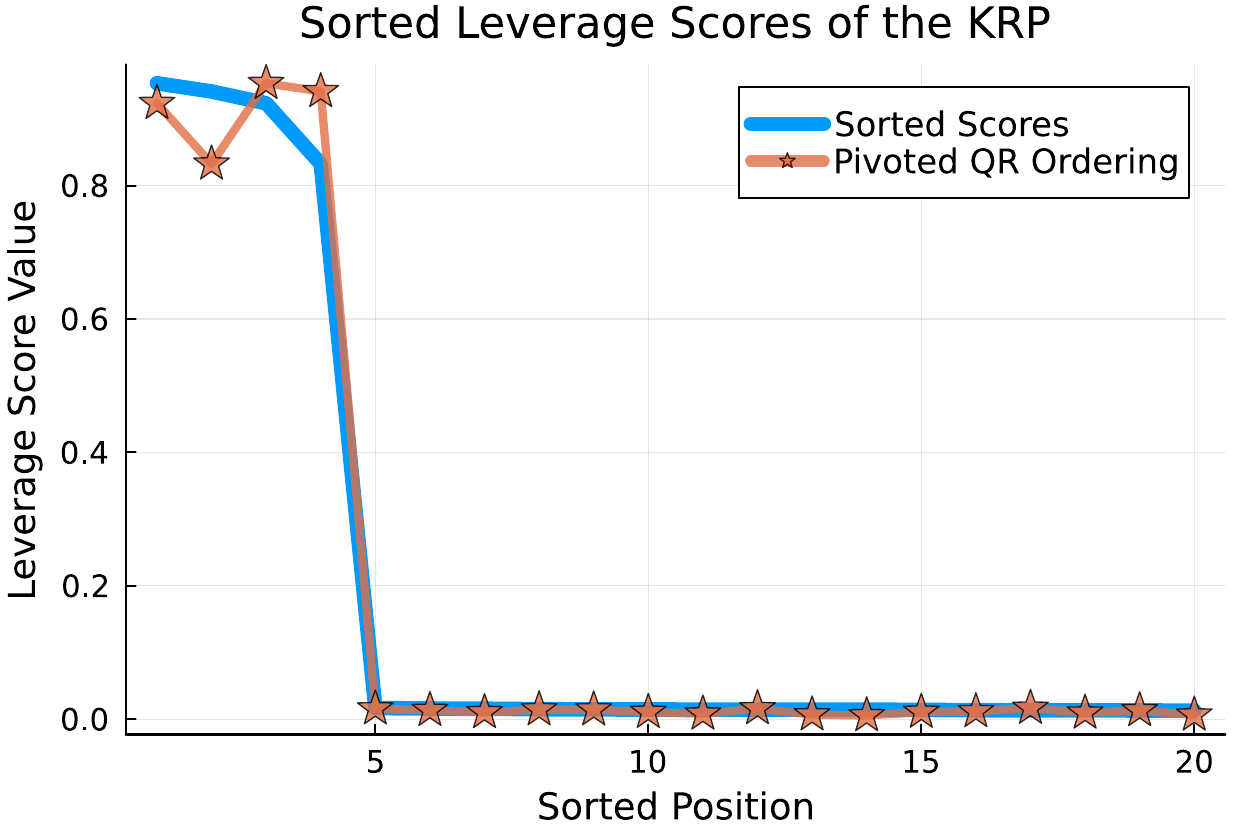}
        \caption{}
        \label{fig:DisEn1}
    \end{subfigure}\hfill
    \begin{subfigure}{0.5\textwidth}
        \includegraphics[width=.8\columnwidth]{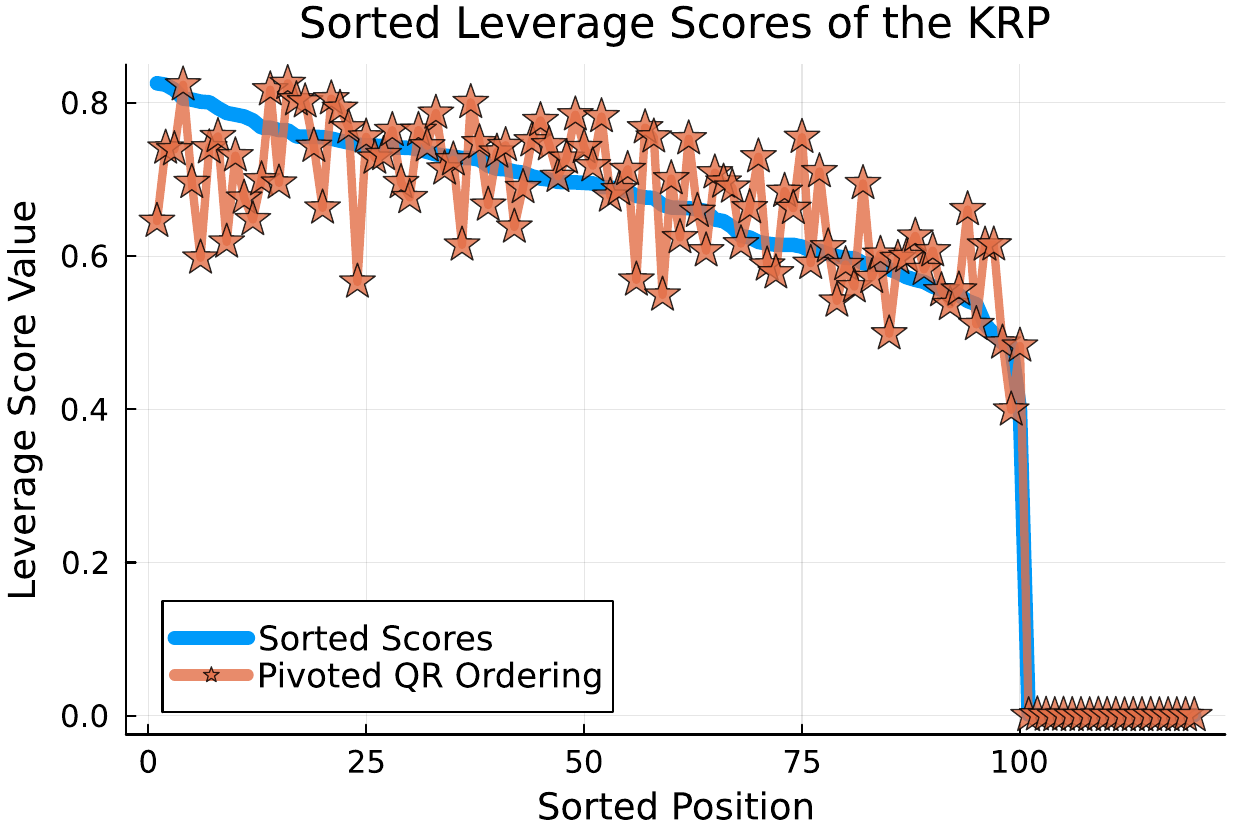}
        \caption{}
        \label{fig:AbsEnErr2}
    \end{subfigure}\hfill
    \begin{subfigure}{0.49\textwidth}
        \includegraphics[width=.8\linewidth]{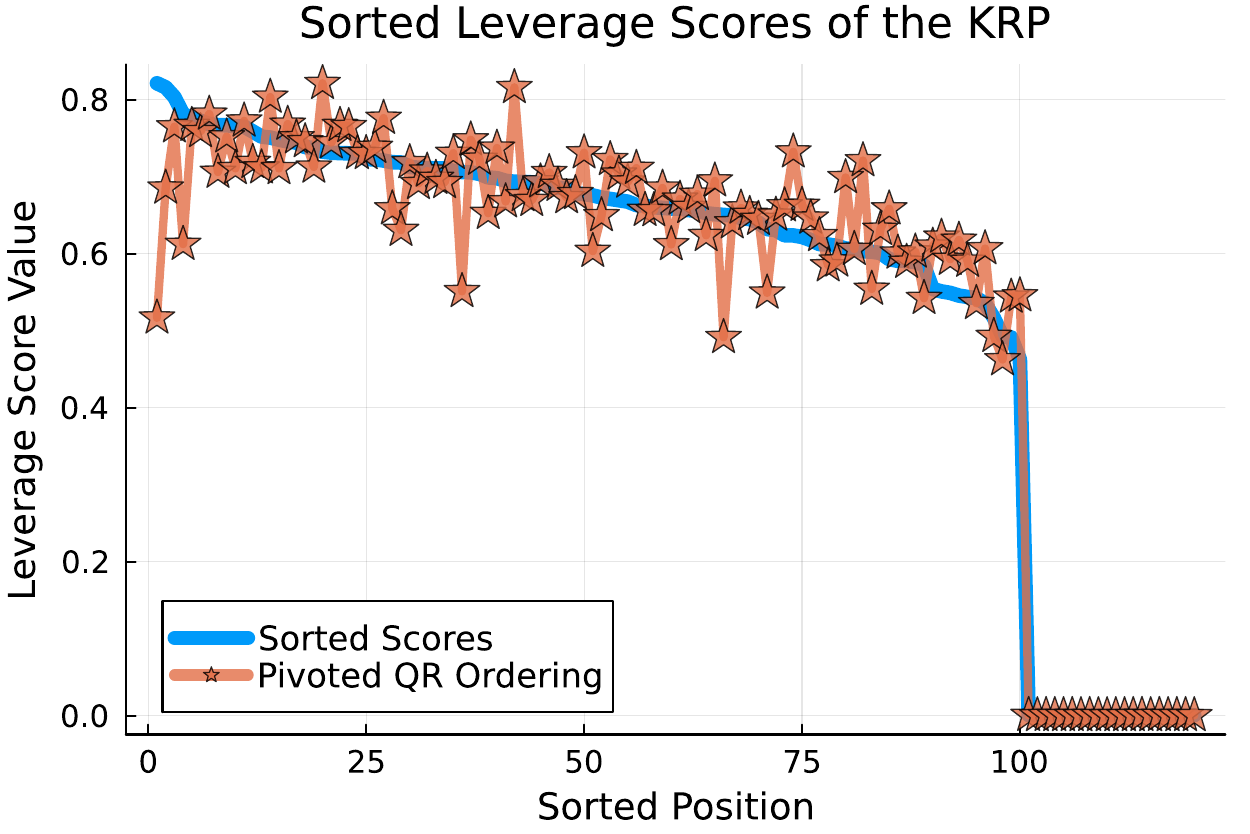}
        \caption{}
        \label{fig:DisEn2}
    \end{subfigure}\hfill
\caption{Sorted leverage scores of the KRP from an order-3 tensor synthetic tensor with dimensions (531, 308, 640). (a) and (b) represent data from a random tensor of CP rank 10 where 2 large leverage score are embedded into each factor matrix.
(c) and (d) represent data from a random tensor of CP rank 70 where 10 large condition numbers are embedded into each factor matrix. (a) and (c) contain the leverage scores of $W^*_1$ and (b) and (d) contain the leverage scores of $W^*_2$.}
\label{fig:lev_orders}
\end{figure}
\subsection{Samples from the Pivoted QR Scheme}
\label{QR_sampling_section}
Given the existence of the exact rank-$R$ CPD for a tensor $\mathcal{T}\in \mathbb{R}^{n_1,\times n_2\times\dots\times n_N}$ where, for convenience, $R < n_1, n_2, ... n_N$, we can express the mode-$k$ unfolding, $\mathbf{T}_k^T$, in the following way
\begin{align}
    \mathcal{T} &\equiv [\![ {\bf F}_1^*, {\bf F}_2^*, \dots {\bf F}_N^*  ] \!]\\ \nonumber
    {\bf T}^{T}_{k} &= {\bf W}^{*}_k [{\bf F}^*_{k}]^{T} \\ \nonumber
\end{align}
where $[ {\bf F}_1^*, {\bf F}_2^*, \dots {\bf F}_N^*  ]$ is a set of unknown factor matrices.
Our goal is to determine a single collection of good row samples using the leverage scores of $\mathbf{W}^*_k$. However, the exact Khatri--Rao product matrix $\mathbf W_k^*$ is unknown. We therefore seek to relate its leverage scores to those of the observable unfolding $\mathbf T_k^T$. To see how they are linked, let
$\mathbf{W}_{k}^{*}=\mathbf{U}_{W}\mathbf{\Sigma}_{W}\mathbf{V}_{W}^{T}$ be the SVD factorization of $\mathbf{W}_{k}^{*}$, where $\mathbf{U}_{W}\in \mathbb{R}^{m\times R}$. Then 
\begin{align*}
    \mathbf{T}_{k}^{T}
    &=\mathbf{W}_{k}^{*}\mathbf{F}_{k}^{*T}
    =\mathbf{U}_{W}\mathbf{\Sigma}_{W}\mathbf{V}_{W}^{T}\mathbf{F}_{k}^{*T}\\
    &=\mathbf{U}_{W}\bar{\mathbf{U}}\bar{\mathbf{\Sigma}}\bar{\mathbf{V}}^{T}
    =\mathbf{U}_{T}\bar{\mathbf{\Sigma}}\bar{\mathbf{V}}^{T}
\end{align*}
where in the second equality we denote by $\bar{\mathbf{U}}\bar{\mathbf{\Sigma}}\bar{\mathbf{V}}^{T}$ the SVD of
$\mathbf{\Sigma}_{W}\mathbf{V}_{W}^{T}\mathbf{F}_{k}^{*T}\in \mathbb{R}^{R \times n_k}$.
Assume that $R\le n_k$ and that the factor matrix $\mathbf F_k^*$ has full column rank $R$. Then we have
$\bar{\mathbf{U}}\in \mathbb{R}^{R \times R}$, $\bar{\mathbf{\Sigma}}\in \mathbb{R}^{R \times n_k}$, and $\bar{\mathbf{V}}\in \mathbb{R}^{n_k \times n_k}$, which implies $\mathbf{U}_{T}\in \mathbb{R}^{m\times R}$. This implies that the final factorization $\mathbf{U}_{T}\bar{\mathbf{\Sigma}}\bar{\mathbf{V}}^{T}$ is an SVD for $\mathbf{T}^{T}_{k}$.
This shows that in this case, the leverage scores of $\mathbf{W}_{k}^{*}$,
\[
l_{i}(\mathbf{W}_{k}^{*})=\|(\mathbf{U}_{W})_{i,:}\|_{2}^{2},
\]
satisfy
\[
l_{i}(\mathbf{W}_{k}^{*})
=\|(\mathbf{U}_{W})_{i,:}\|_{2}^{2}
=\|(\mathbf{U}_{W}\bar{\mathbf{U}})_{i,:}\|_{2}^{2},
\]
and therefore are also the leverage scores of $\mathbf{T}_{k}^{T}$, since $\mathbf{U}_{T}=\mathbf{U}_{W}\bar{\mathbf{U}}$.
For the second case $R>n_k$, we have $\mathbf{U}_{T}=\mathbf{U}_{W}\bar{\mathbf{U}}\in \mathbb{R}^{m \times n_k}$. 
This implies that the leverage scores of $\mathbf{T}_{k}^{T}$ correspond to the leverage scores of a rank-$n_k$ orthogonal projection of $\mathbf{W}_{k}^{*}$ onto the subspace spanned by $\mathbf{U}_{W}\bar{\mathbf{U}}$. In this case, we do not recover the exact leverage scores of $\mathbf{W}_{k}^{*}$, however, they are only linked through this projected subspace. But in practice, we can see that the sampling from $\mathbf{T}_{k}^{T}$ still works as in Figure \ref{fig:lev_orders_r_big} which shows results for $R>n_k$.
 
\begin{figure}[t!]
\begin{subfigure}{0.5\textwidth}
    \includegraphics[width=0.8\columnwidth]{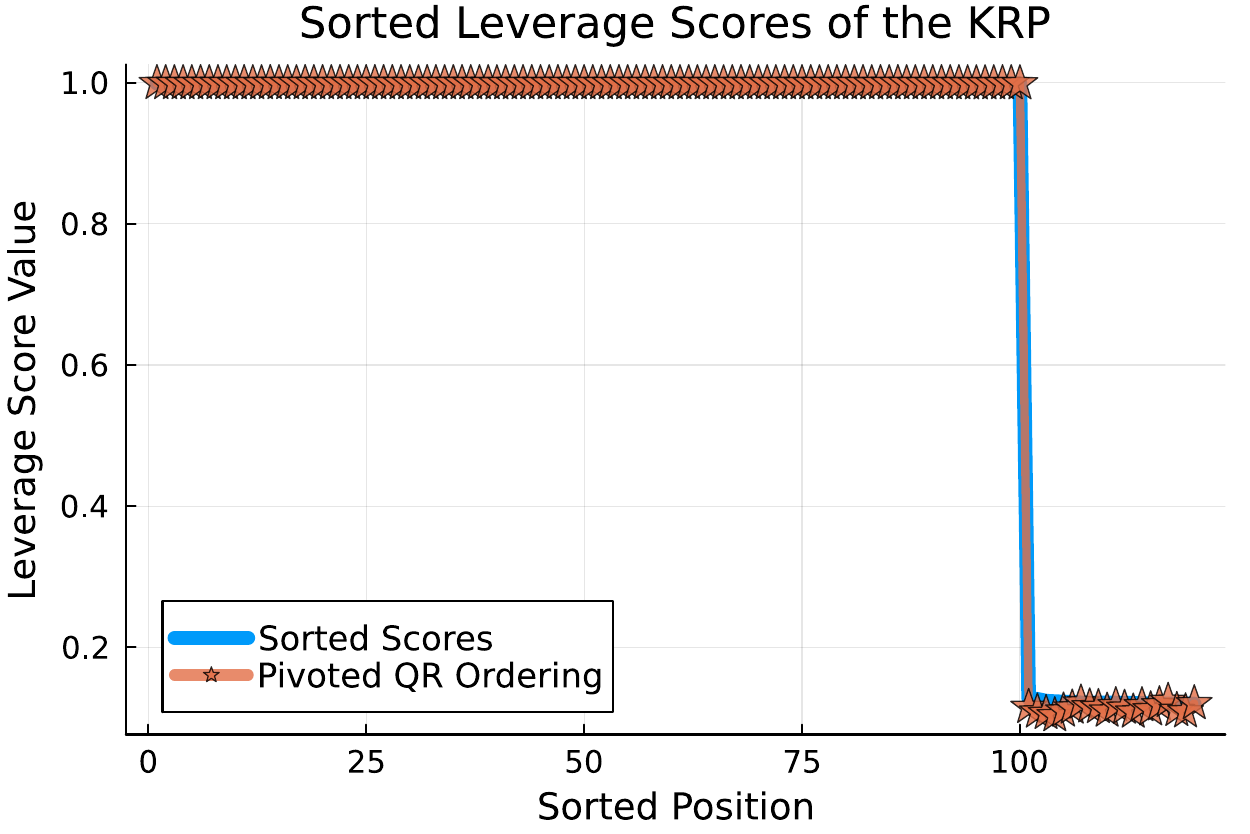}
    \caption{}
\end{subfigure}\hfill
\begin{subfigure}{0.49\textwidth}
    \includegraphics[width=.8\linewidth]{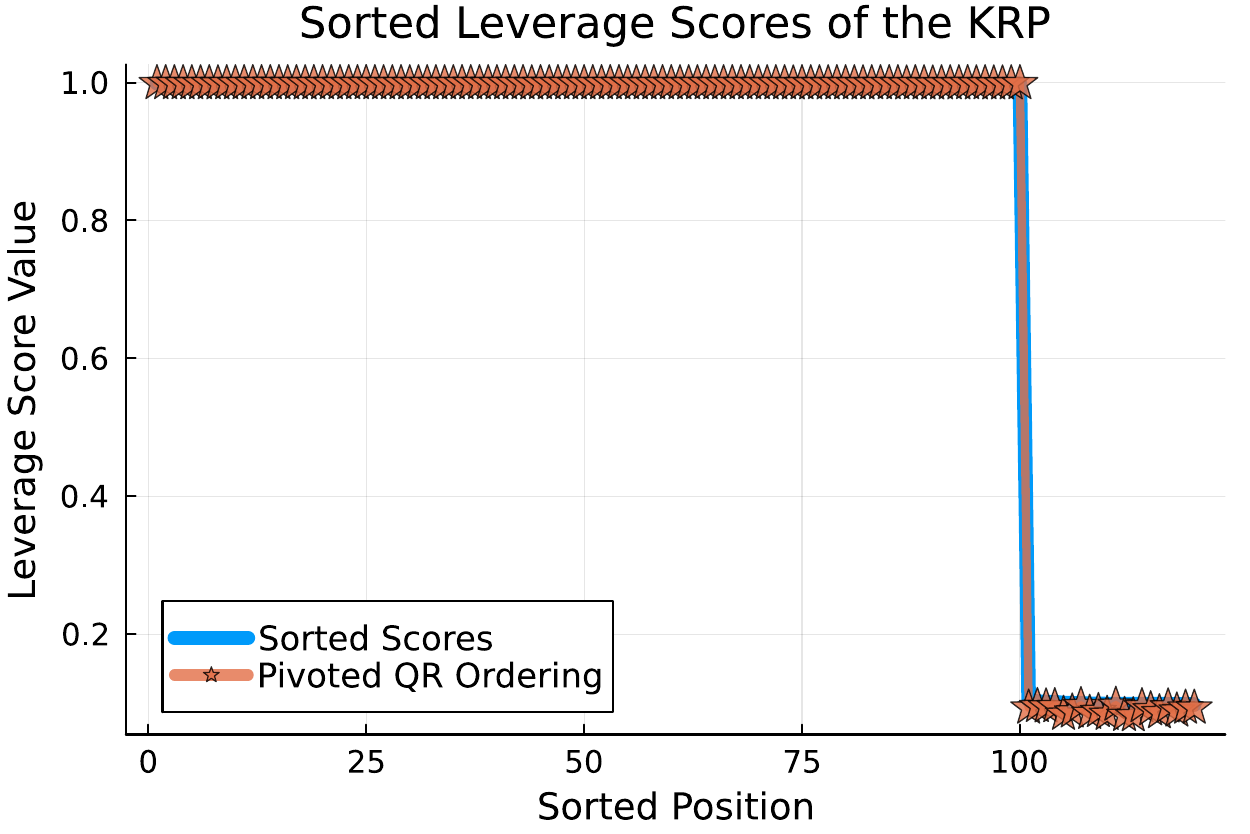}
    \caption{}
\end{subfigure}\hfill
\caption{Sorted leverage scores of the KRP from an order-3 tensor synthetic tensor with dimensions (531, 308, 640), a CP rank 1000, and where 10 large leverage condition numbers are embedded into each factor matrix. (a) considers the leverage scores of $W^*_1$ and (b) considers the leverage scores of $W^*_2$}
\label{fig:lev_orders_r_big}
\end{figure}

Although it is possible to find the leverage scores of the KRP using the SVD (or QR) of ${\bf T}^T_k$, the computational cost scale exponentially with the order of $\mathcal{T}$.
Therefore, this method would only be reasonable for relatively small tensors for which the SVD of $\mathbf{T}^T_k$ can be computed. 
In order to develop a sampling strategy that works for large tensors, instead of computing the exact row leverage scores of $\mathbf{T}_{k}^{T}$, we use column pivoted QR factorization on $\mathbf{T}_{k}$ to sample columns from the pivoting matrix. 
A demonstration of the pivoted QR's ability to identify the most statistically important rows of a known, exact KRP $\mathbf{W}_{k}^{*}$ is represented in \cref{fig:lev_orders}. 
In this figure, we construct order-3 tensors from a CPD where the true factor matrices each contain a known number of large leverage scores. 
We compute the exact leverage scores of KRP for each matricization of $\mathcal{T}$ and order the leverage scores from largest to smallest. 
We compare this optimal ordering to the order determined using the column-pivoted QR of the target tensor. 
We see that the column-pivoted QR of $\mathbf{T}_k$ can effectively order the leverage scores of $\mathbf{W}^*_k$.

As described in \cite{CSSP_Xue}, leverage-score sampling from $\mathbf{T}_{k}^{T}$ and column sampling via strong rank-revealing QR factorization of $\mathbf{T}_{k}$ can both be connected to Determinantal Point Processes (DPPs), where DPP is similar to volume sampling in randomized numerical linear algebra. We briefly describe this connection, for more details the reader can consult \cite{CSSP_Xue}.
Formally, let $\mathbf{L}\in \mathbb{R}^{m \times m}$ be a symmetric positive semidefinite matrix. A point process $S \subseteq [m]$ is sampled according to a determinantal point process, $S \sim \mathrm{DPP}(\mathbf{L})$, if
\[
\Pr(S) = \frac{\det(\mathbf{L}_S)}{\det(\mathbf{I}+\mathbf{L})}.
\]

To see the connection between DPP sampling and volume sampling, consider $\mathbf{L} = \mathbf{T}_{k}^{T}\mathbf{T}_{k} \in \mathbb{R}^{m \times m}$. In this case, the probability of sampling a subset $S \subseteq [m]$ under the corresponding $L$-ensemble DPP satisfies
\[
\Pr(S) \propto \det((\mathbf{T}_{k}^{T}\mathbf{T}_{k})_S)
= \mathrm{Vol}^{2}(\{\mathbf{t}_{i}: i \in S\}),
\]
where $\mathrm{Vol}(\{\mathbf{t}_{i}: i \in S\})$ denotes the $|S|$-dimensional volume of the parallelepiped spanned by the columns $\{\mathbf{t}_i\}_{i \in S}$ of $\mathbf{T}_{k}$.

From one perspective, it is shown in \cite{derezinski2021determinantal} that the marginal inclusion probabilities of a DPP are given by
\[
\mathbb{P}(i \in S) = \bigl(\mathbf{L}(\mathbf{I}+\mathbf{L})^{-1}\bigr)_{ii}.
\]

From another perspective, strong rank-revealing QR (sRRQR) selects a subset of columns that approximately maximizes the same volume objective, making it a deterministic analogue of volume-based sampling. In this sense, leverage-score sampling, DPPs, and sRRQR are closely connected through the shared underlying objective of selecting high-volume subsets.

\subsection{Hybrid Sampling for CP-sRRQR}
Here we propose two methods for determining row-wise samples for each LS subproblem \eqref{ALS_update} of the CPD-ALS:
\begin{enumerate}
    \item {\bf Deterministic Sampling}: take all $s$ samples from the pivot matrix in the order provided by the column pivot QR algorithm.
    \item {\bf Hybrid Sampling}: take the first $\alpha$ samples from the order provided by the pivot matrix and then uniformly sampling the remaining $s - \alpha$ columns from the pivots not chosen in the first $\alpha$ positions. $\alpha$ is chosen to be the rank of the $\mathbf{T}_{k}$ which can be determined as either $n_k$ or by considering the magnitude of the diagonal elements of $\mathbf{R}_{k}$ in the QR procedure. 
\end{enumerate}

Method $1.$ is a straightforward approach and assumes that the QR can order all leverages scores of the KRP. 
Unfortunately, the column-pivoted QR is a greedy algorithm that terminates after ordering, at most, $n_k$ columns of $\mathbf{T}_{k}$ and, in general, $n_k \ll m$ so the number of trustworthy pivots from the QR is relatively small. 
Furthermore, in our tests, we have found that the QR ordering algorithm may fail to recover {\bf all} intermediate valued leverage scores for large tensors, tensors of large CP rank, and tensors with a large number of statistically important rows in their KRPs.
As a means to correct this problem we propose method $2.$ a hybrid deterministic/probabilistic method.
In this work, for method $2.$ we determine $\alpha$ by approximating the rank of $\bf{T}_k$ using a relative truncation metric of the diagonal elements of $\mathbf{R}_{k}$.
As a means to compare these sampling procedures, we consider the COIL dataset.

{\textbf{COIL data:}}
COIL is an image-recognition dataset containing 100 images each in 72 different positions\cite{Nene:1996:ColumbiaOI}.
This set has been established as a tensor decomposition benchmark\cite{battaglino2018practical,larsen2022practical,Zhou:2018:Arxiv} due to its irregular dimensions and large size.
In this set, each image is 128 x 128 pixels and utilize three color channels associated with standard RGB. 
In the following results we consider only 20 of the 100 images with all single image rotations resulting in a tensor of dimension 1440x128x128x3, which requires 0.56GB of memory.
Please note, we choose 20 images because we saw no major difference in any results regardless of the number of selected images.
In our first test, we decompose this tensor to a fixed CP of rank 20 with a different number of row-wise samples.
We compare the results of the QR-based methods to the randomized leverage score-based sampling CPD-ALS algorithm introduced by Larsen et al.\cite{larsen2022practical}.
In \cref{fig:first_fixed_20}, we use the deterministic sampling technique and see very poor results for the QR-based sampled ALS methods whereas in \cref{fig:second_fixed_20} we use the hybrid sampling technique and see that our QR-based sampling algorithms find decompositions which are comparable to ones found using Larsen's leverage score-based sampled ALS algorithm.
\begin{figure}[H]
    \centering
    \begin{subfigure}[b]{0.45\textwidth}
        \includegraphics[width=\textwidth]{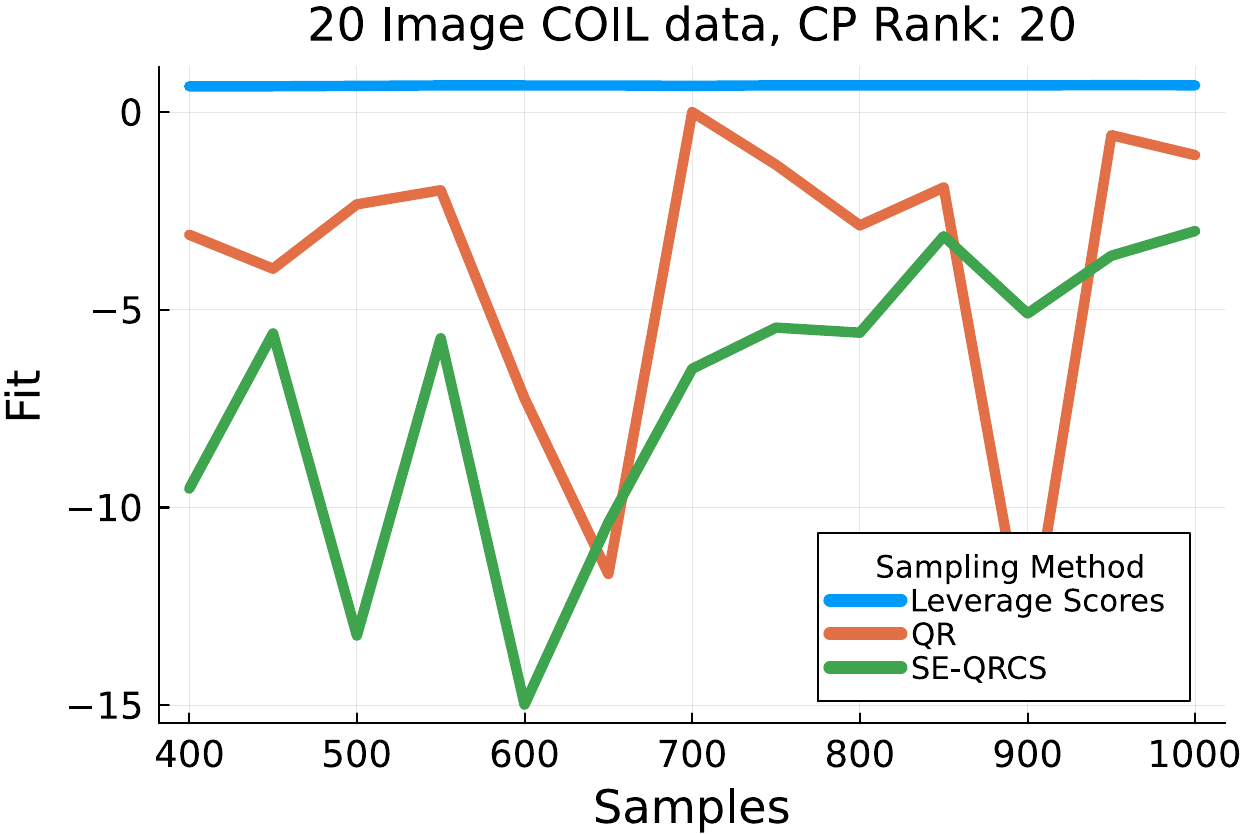}
        \caption{CPD fit of COIL data with a fixed rank using sampling method 1.}
        \label{fig:first_fixed_20}
    \end{subfigure}
    \hfill
    \begin{subfigure}[b]{0.45\textwidth}
        \includegraphics[width=\textwidth]{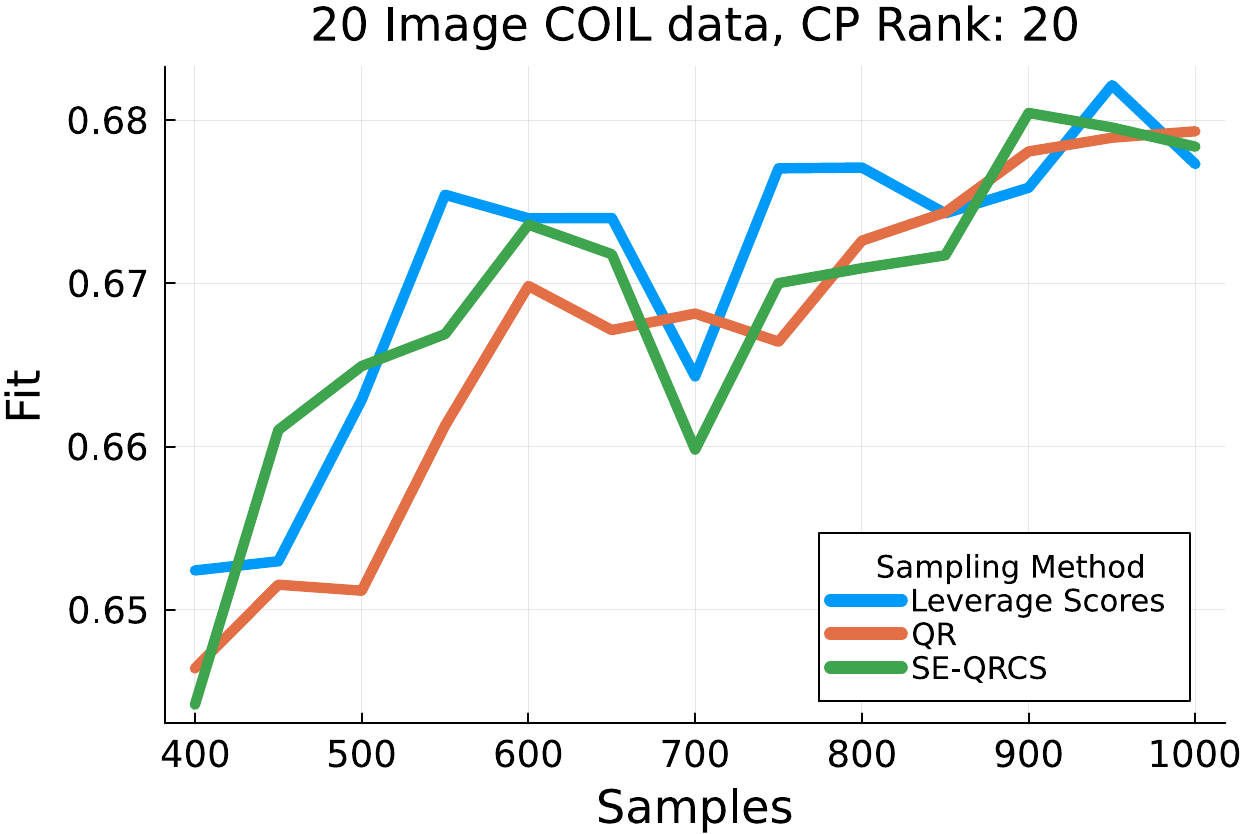}
        \caption{CPD fit of COIL data with a fixed rank using sampling method 2.}
        \label{fig:second_fixed_20}
    \end{subfigure}
\end{figure}
We repeat this experiment in \cref{fig:first_fix_samples} and \cref{fig:second_fix_samples} where we, now, fix the number of row sample to $s = 1000$ and change the CP rank from 20 to 300. 
These figures further corroborate the rationale to use the hybrid sampling technique.
\begin{figure}[H]
    \begin{subfigure}[b]{0.45\textwidth}
        \includegraphics[width=\textwidth]{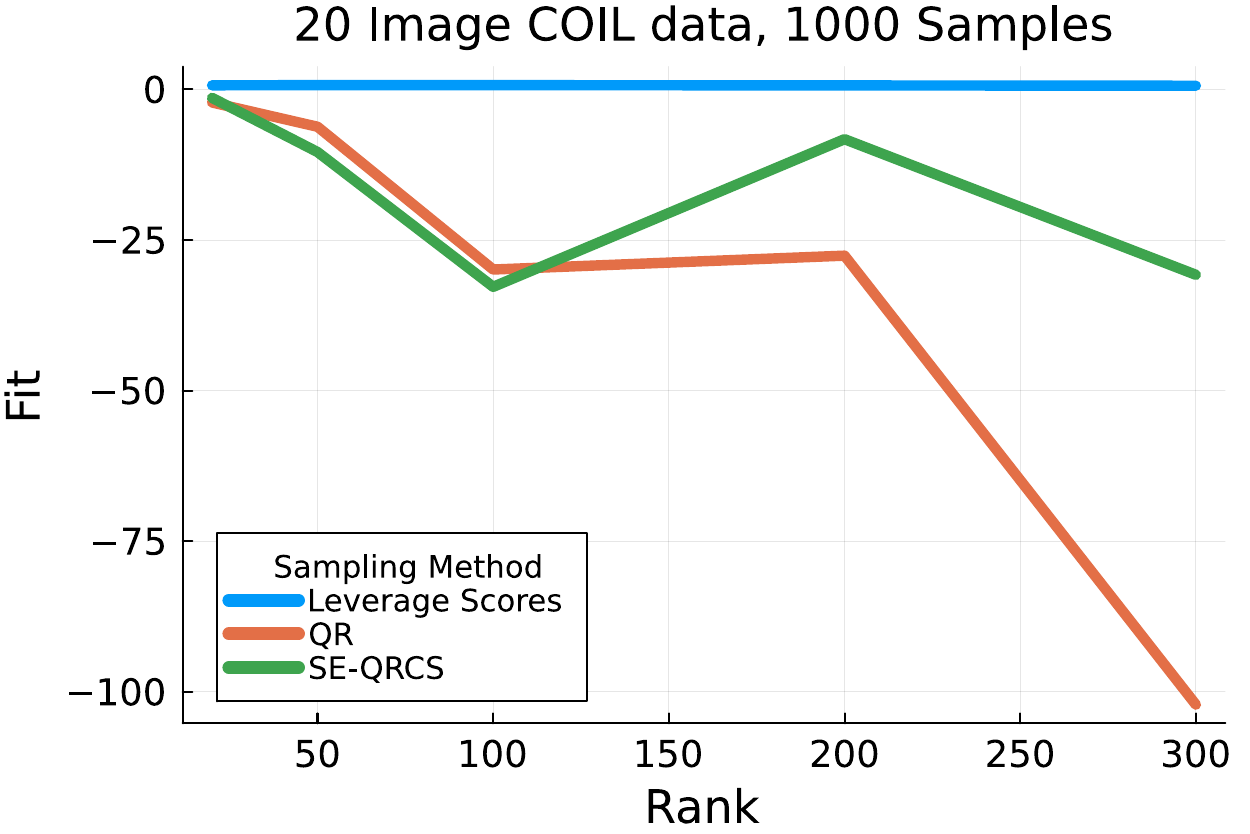}
        \caption{CPD fit of COIL data with a fixed number of samples using sampling method 1.}
        \label{fig:first_fix_samples}
    \end{subfigure}\hfill
    \begin{subfigure}[b]{0.45\textwidth}
        \includegraphics[width=\textwidth]{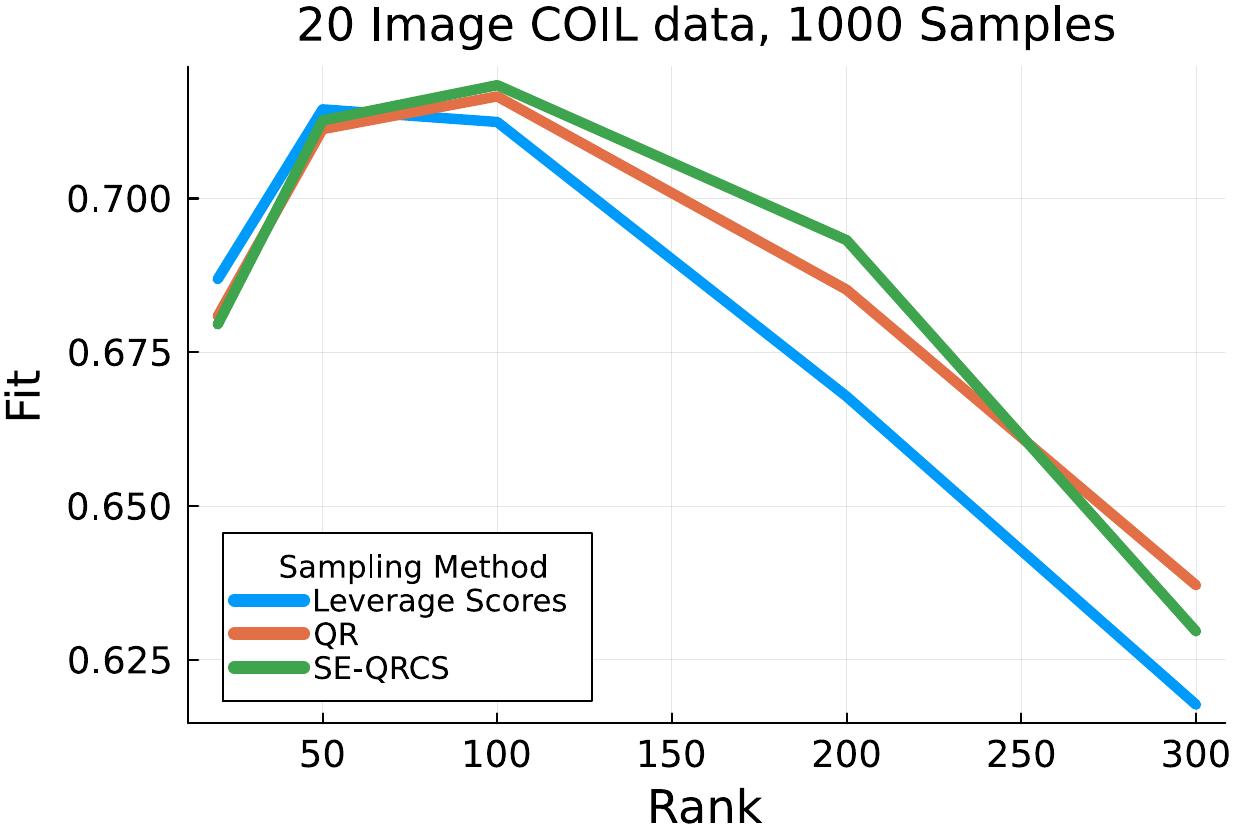}
        \caption{CPD fit of COIL data with a fixed number of samples using sampling method 2.}
        \label{fig:second_fix_samples}
    \end{subfigure}
\end{figure}

In Algorithm~\ref{Algorithm1}, we present the CPD-sRRQR algorithm that uses Hybrid sampling to sketch the least square problem given in CPD \eqref{ALS_update}. Due to the high computational cost of the deterministic sRRQR, we use instead the randomized sRRQR method, SE-QRCS.
\subsection{Computational Complexity}
The computational complexity of the QR-based sampled CPD-ALS algorithm consists of two main parts. 
The first part is the application of the SE-QRCS method to each matricization $\mathbf{T}_{k}$ and the construction of the sampled target tensors $\mathbf{S}_k \mathbf{T}_{k}^T$.
For each mode $k$, the complexity of SE-QRCS \cite{fakih2025efficient} is given by $O(mn_{k}\tau+n_{k}(p+l)\alpha)$ where the first term corresponds to the complexity of applying a sparse embedding to $\mathbf{T}_{k}$ and the second term corresponds to applying QR factorization to the sketched matrix $(\mathbf{T}_{k})_{sk}$ and the reduced column subset matrix $(\mathbf{T}_{k})_{sub}^{1}$.
After determining the pivot matrix, the complexity of sampling the $k$-th unfolding of the target tensor is $\mathcal{O}(n_k s)$.
As $\mathcal{T}$ is not resampled in the ALS algorithm, these sampled matricizations can be stored instead of the full $\mathcal{T}$ tensor. 
This replacement may reduce the computational storage complexity from $\mathcal{O}(\prod^N_1 n_k)$ to $\mathcal{O}(\sum_1^Nn_ks)$.

The second part of our QR-based sampled CPD-ALS algorithm is the cost associated with the sampled ALS optimization via the construction of the sampled KRP and the LS solve. 
The computational complexity of constructing the KRP for each LS subproblem using the sampled KRP function developed by Battaglino et al.~\cite{battaglino2018practical} is $\mathcal{O}((N-1)sR)$.
To solve the LS problem, we propose two common algorithms. 
The first algorithm solves the equation
\[\mathbf{S}_{k}\mathbf{W}_{k}\mathbf{F}_{k}^{T}=\mathbf{S}_{k}\mathbf{T}_{k}^{T}\]
using a column-pivoted QR matrix-solve algorithm. This algorithm consists of computing the column-pivoted QR factorization of $\mathbf{S}_{k}\mathbf{W}_{k}$ and, subsequently, sample the matricized tensor $\mathbf{S}_{k}\mathbf{T}_{k}^{T}$. The computational complexity of these operations is $O(n_{k}sR+sR^{2})$.
The second algorithm solves the sketched normal equations, 
\begin{align}
\mathbf{W}_{k}^{T}\mathbf{S}_{k}^{T}\mathbf{S}_{k}\mathbf{W}_{k}\mathbf{F}_{k}^{T}=\mathbf{W}_{k}^{T}\mathbf{S}_{k}^{T}\mathbf{S}_{k}\mathbf{T}_{k}^{T}.
\end{align}
The complexity of this algorithm is $\mathcal{O}(sR^2 + n_ksR + R^3)$, which can be split into the computations of $\mathbf{W}_{k}^{T}\mathbf{S}_{k}^{T}\mathbf{S}_{k}\mathbf{W}_{k}$, $\mathbf{W}_{k}^{T}\mathbf{S}_{k}^{T}\mathbf{S}_{k}\mathbf{T}_{k}^{T}$, and a Cholesky-based matrix solve, respectively.
Although this second equation does require more work to both compute the Grammian matrix and the sketched matricized tensor times KRP, we find that the most time intensive step in the LS solve is the matrix-solve.
Furthermore, we find that Cholesky-based linear solve algorithms are significantly more efficient than QR-based algorithms.
We illustrate the performance benefits of using the normal equation based algorithm in \cref{fig:normal} and in
\cref{tab:LS_solvers} we demonstrate the numerical stability of the normal-equation based ALS solver.

\begin{table}[H]
\centering
\begin{tabular}{|c|c|c|c|}
\hline
  Tests & $N_\mathrm{samples}$ & LS Solve Fit & Normal Equations Fit \\ \hline
1  & 200  & 0.378 & 0.494   \\ \hline
2  & 1000  & 0.722 & 0.722  \\ \hline
3  & 2000   & 0.673 & 0.673 \\ \hline
\end{tabular}
\caption{Comparing the quality of fit using two different solver algorithms for a random $(90 \times 90 \times 90)$ tensor constructed from a CP of rank 100 and modified elements as described in \cref{sec:comp}.}
\label{tab:LS_solvers}
\end{table}

\begin{figure}
    \centering
    \includegraphics[width=0.5\linewidth]{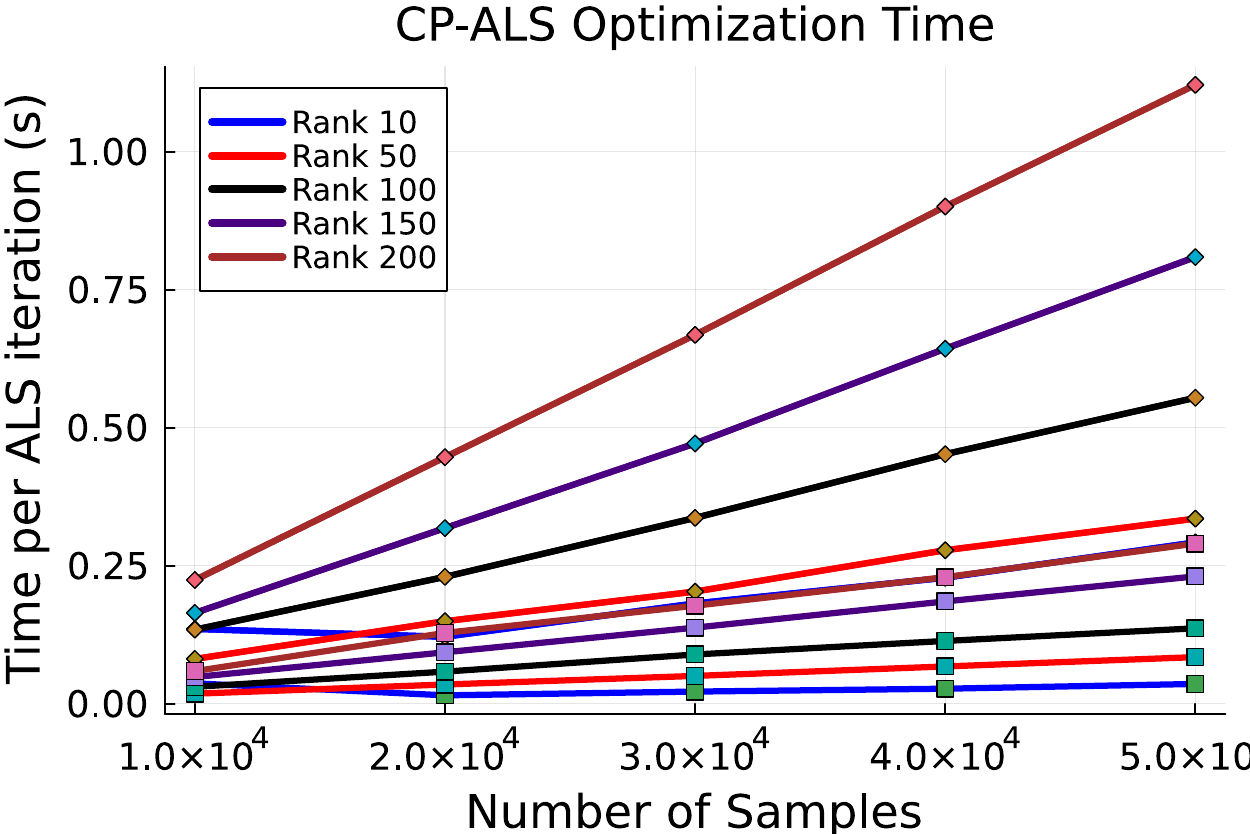}
    \caption{Per iteration timing results comparing the standard sampled least-squares based optimization strategy (diamond markers) to the sampled normal-equation based strategy (square markers) for a $500 \times 500 \times 500$ tensor.}
    \label{fig:normal}
\end{figure}

%% file: Theoretical_Results.tex
\section{Theoretical Guarantees}
\subsection{Leverage Score Sampling}
In this section, we present the number of samples needed for each least square subproblem \eqref{ALS_update}, such that the sketched least square subproblem achieves $(1+\epsilon)$-relative accuracy in the residual norm. First we note that in this section we avoid the index referring to the mode, as the analysis holds for every ALS problem. In our setting, we are given the matrices $\mathbf{W}\in \mathbb{R}^{m\times R}$ and $\mathbf{T}\in \mathbb{R}^{m\times n}$ and we define the overdetermined least squares problem, 
    \[\mathcal{R}^{2}=\min_{\mathbf{F}\in \mathbb{R}^{n\times R}}\|\mathbf{WF}^{T}-\mathbf{T}\|_{F}^{2}.\]
Given the non-convex nature of the ALS objective function,  we assume in our setting that a good initialization is chosen such that it guarantees the convergence of the iterative process to the global solution rather than becoming trapped in stationary points \cite{Uschmajew:2012:SJMAA, kolda2009tensor}. Let $\mathbf{W}=\mathbf{U}_{W}\mathbf{\Sigma}_{W}\mathbf{V}_{W}^{T}$ 
and $\mathbf{T}=\bf{W^{*}F^{*}}$  with $\mathbf{W^{*}}=\mathbf{U}_{W^{*}}\mathbf{\Sigma}_{W^{*}} \mathbf{V}_{W^{*}}^{T}\in \mathbb{R}^{m\times R}$ be the SVD factorization of $\mathbf{W}$ and $\mathbf{W^{*}}$ where $\mathbf{U}_{W}$ and $\mathbf{U}_{W^{*}}$ are the orthonormal basis for the columns spaces of $\mathbf{W}$ and $\mathbf{W^{*}}$ respectively. Denote by $\mathbf{U}_{W}^{\perp}$ an orthonormal basis for the subspace orthogonal to the column space of $\mathbf{W}$ and $\mathbf{T}^{\perp}=\mathbf{U}_{W^{\perp}}(\mathbf{U}_{W^{\perp}})^{T}\mathbf{T}$ the projection of the columns of $\mathbf{T}$ to this orthogonal subspace.
Then the least square problem can be represented in terms of $\mathbf{T^{\perp}}$, 
\[\mathcal{R}^{2}=\min_{\mathbf{F}\in \mathbb{R}^{n\times R}}\|\mathbf{WF}^{T}-\mathbf{T}\|_{F}^{2}=\|\mathbf{U}_{W}^{\perp}(\mathbf{U}_{W}^{\perp})^{T}\mathbf{T}\|_{F}^{2}=\|\mathbf{T}^{\perp}\|_{F}^{2}\]

Let $\mathbf{S}\in \mathbb{R}^{s\times m}$ be a row selection matrix defined by 
\[\mathbf{S}_{ij}=\begin{cases}
    \frac{1}{\sqrt{sp_{i}}} \quad \text{if }\psi_{i}=j \\
    0 \quad \text{otherwise}
\end{cases}\]
where $\psi_{i}$ are i.i.d samples from a distribution $\mathbf{p}$ on $[m]$ according to row leverage scores of $\bf{W^{*}}$. To ensure later that the matrix multiplication estimator in Lemma \ref{matrix approximation for W} remains unbiased, we introduce a small smoothing parameter $\varepsilon \in (0, 1)$ if $\mathbf{p}_{i}=0$ for some $i$. We define the regularized sampling probabilities $\tilde{p}_i$ as a mixture of the original probabilities and a uniform distribution:
\begin{equation}
\label{perturbed probabilities}
\tilde{p}_i = (1 - \varepsilon)p_i + \frac{\varepsilon}{m}
\end{equation}
For simplicity of presentation, we assume henceforth that the row leverage scores of $\mathbf{W}^{*}$ are strictly positive (i.e., $p_{i} > 0$ for all $i$). Otherwise, they can be implicitly replaced by the regularized probabilities defined in \eqref{perturbed probabilities} 
 Define the sketched least squares problem, 
\begin{equation}
\label{sketched_least_square}
\min_{\mathbf{F}\in \mathbb{R}^{n\times R}}\|\mathbf{S WF}^{T} -\mathbf{S W^{*}F^{*}}\|_{F}^{2}.
\end{equation}
Following a similar proof as in \cite{larsen2022practical}, the sketched least squares \eqref{sketched_least_square} approximates the original least squares up to tolerance $\epsilon>0$ if two structural conditions are satisfied on the sketched matrix $\mathbf{S}$. The two structural conditions are summarized in the next Theorem \cite{drineas2011faster}.

\begin{theorem}[Theorem 10 in \cite{larsen2022practical}]
\label{Sketched_lS_Theorem}
    Let $\mathbf{W}\in \mathbb{R}^{m\times R}$ and $\mathbf{T}\in \mathbb{R}^{m\times n}$ and $\mathbf{S}\in \mathbb{R}^{s\times m}$. Assume that $\mathbf{S}$ satisfies the two structural conditions for some $\epsilon>0$
    \begin{equation}
    \label{structural_cond_1}
        \sigma_{\min}^{2}(\mathbf{S U}_{W})\geq 1/\sqrt{2}
    \end{equation}
    \begin{equation}
        \label{structural_cond_2}
        \|\mathbf{U}_{W}^{T}\mathbf{S}^{T}\mathbf{S T}^{\perp}\|_{F}^{2}\leq \epsilon\mathcal{R}^{2}/2
    \end{equation}
    Then the solution to the sketched problem \eqref{sketched_least_square}, $\mathbf{F_{sk}}$, and the optimal solution $\mathbf{F_{opt}}$ satisfies the following two bounds:
\begin{equation*}
\begin{aligned}
    \|\mathbf{W}\mathbf{F}_{sk}^{T}-\mathbf{T}\|_{F}^{2}
    \le (1+\epsilon)\|\mathbf{WF}_{opt}^{T}-\mathbf{T}\|_{F}^{2} \\[6pt]
    \|\mathbf{F}_{opt}-\mathbf{F}_{sk}\|_{F}^{2}
    \le \frac{\epsilon\,\|\mathbf{WF}_{opt}^{T}-\mathbf{T}\|_{F}^{2}}{\sigma_{\min}^{2}(\mathbf{W})}
\end{aligned}
\end{equation*}
\end{theorem}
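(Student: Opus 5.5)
The plan is the standard Drineas--Mahoney structural argument for sketched least squares. It is deterministic given the two conditions \eqref{structural_cond_1} and \eqref{structural_cond_2}, so randomness plays no role here.

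\emph{Reduction to a problem in coordinates.} Write any candidate as $\mathbf{W}\mathbf{F}^{T}=\mathbf{U}_{W}\mathbf{Z}$ with $\mathbf{Z}=\mathbf{\Sigma}_{W}\mathbf{V}_{W}^{T}\mathbf{F}^{T}$, and assume $\mathbf{W}$ has full column rank $R$ so that this map is a bijection. Decompose $\mathbf{T}=\mathbf{U}_{W}\mathbf{U}_{W}^{T}\mathbf{T}+\mathbf{T}^{\perp}$. The optimum then has $\mathbf{Z}_{opt}=\mathbf{U}_{W}^{T}\mathbf{T}$ and residual $\mathbf{T}^{\perp}$, with $\|\mathbf{T}^{\perp}\|_F^2=\mathcal{R}^2$.

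\emph{Sketched problem.} The sketched problem is $\min_{\mathbf{Z}}\|\mathbf{S}\mathbf{U}_{W}\mathbf{Z}-\mathbf{S}\mathbf{T}\|_F$. Condition \eqref{structural_cond_1} gives $\sigma_{\min}(\mathbf{S}\mathbf{U}_W)>0$, so $\mathbf{S}\mathbf{U}_W$ has full column rank. The minimizer is therefore
\[
\mathbf{Z}_{sk}=(\mathbf{S}\mathbf{U}_W)^{\dagger}\mathbf{S}\mathbf{T}=\mathbf{Z}_{opt}+(\mathbf{S}\mathbf{U}_W)^{\dagger}\mathbf{S}\mathbf{T}^{\perp},
\]
using $(\mathbf{S}\mathbf{U}_W)^{\dagger}\mathbf{S}\mathbf{U}_W=\mathbf{I}$.

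\emph{Bounding the coefficient error.} Set $\mathbf{M}=\mathbf{U}_W^T\mathbf{S}^T\mathbf{S}\mathbf{U}_W$. Then $(\mathbf{S}\mathbf{U}_W)^{\dagger}\mathbf{S}\mathbf{T}^{\perp}=\mathbf{M}^{-1}\mathbf{U}_W^T\mathbf{S}^T\mathbf{S}\mathbf{T}^{\perp}$. Since $\|\mathbf{M}^{-1}\|_2=\sigma_{\min}^{-2}(\mathbf{S}\mathbf{U}_W)\le\sqrt{2}$, we get
\[
\|\mathbf{Z}_{opt}-\mathbf{Z}_{sk}\|_F^2\le 2\,\|\mathbf{U}_W^T\mathbf{S}^T\mathbf{S}\mathbf{T}^{\perp}\|_F^2\le \epsilon\mathcal{R}^2
\]
by \eqref{structural_cond_2}.

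\emph{Residual bound.} By Pythagoras, since $\mathbf{U}_W(\mathbf{Z}_{sk}-\mathbf{Z}_{opt})$ lies in $\mathrm{range}(\mathbf{U}_W)$ and $\mathbf{T}^{\perp}$ is orthogonal to it,
\[
\|\mathbf{W}\mathbf{F}_{sk}^T-\mathbf{T}\|_F^2=\|\mathbf{U}_W(\mathbf{Z}_{sk}-\mathbf{Z}_{opt})\|_F^2+\|\mathbf{T}^{\perp}\|_F^2\le(1+\epsilon)\mathcal{R}^2.
\]
This is the first bound.

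\emph{Solution error.} From $\mathbf{W}(\mathbf{F}_{opt}-\mathbf{F}_{sk})^T=\mathbf{U}_W(\mathbf{Z}_{opt}-\mathbf{Z}_{sk})$ we get
\[
\sigma_{\min}^2(\mathbf{W})\,\|\mathbf{F}_{opt}-\mathbf{F}_{sk}\|_F^2\le\|\mathbf{W}(\mathbf{F}_{opt}-\mathbf{F}_{sk})^T\|_F^2=\|\mathbf{Z}_{opt}-\mathbf{Z}_{sk}\|_F^2\le\epsilon\mathcal{R}^2.
\]
This is the second bound.

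\emph{Main obstacle.} The delicate step is the constant bookkeeping. The exponent in \eqref{structural_cond_1}, namely $\sigma_{\min}^2\ge 1/\sqrt{2}$, is exactly what turns $\|\mathbf{M}^{-1}\|_2^2\le 2$ into the factor that cancels the $1/2$ in \eqref{structural_cond_2}. Beyond that, the argument needs full column rank of $\mathbf{W}$ for $\mathbf{F}$ to be uniquely recoverable, and it uses the fact that $\mathbf{T}^{\perp}$ is orthogonal to the range of $\mathbf{W}$, not of $\mathbf{W}^*$.
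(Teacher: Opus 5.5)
Your proof is correct and complete. The paper gives no proof of its own and imports the result from Larsen et al.\ \cite{larsen2022practical}, Theorem 10 (building on Drineas et al.\ \cite{drineas2011faster}); your argument is the standard one behind that result: pass to $\mathbf{U}_W$-coordinates, write $\mathbf{Z}_{sk}-\mathbf{Z}_{opt}=\mathbf{M}^{-1}\mathbf{U}_W^T\mathbf{S}^T\mathbf{S}\mathbf{T}^{\perp}$ with $\|\mathbf{M}^{-1}\|_2\le\sqrt{2}$, then finish with Pythagoras and $\sigma_{\min}(\mathbf{W})$, with the implicit full-column-rank assumption on $\mathbf{W}$ correctly flagged.
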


\begin{lemma}
\label{embedding for W}   
 Let $\epsilon,\delta>0$, $\mathbf{W}=\mathbf{U}_{W}\mathbf{\Sigma}_{W} \mathbf{V}_{W}^{T}\in \mathbb{R}^{m\times R}$ and $\mathbf{W^{*}}\in \mathbb{R}^{m\times R}$ with row leverage scores $l_{i}(\mathbf{W^{*}})$. If $\mathbf{S}\in \mathbb{R}^{s\times m}$ is a scaled row sampling matrix according to a probability distribution $\mathbf{p}\in [0,1]^{m}$, $\mathbf{p}_{i}\geq \beta l_{i}(\mathbf{W^{*}})/R$ for some $\beta\in (0,1]$. Then for  $s>144\gamma R\ln(2R/\delta)/(\beta \epsilon^{2})$,where $\gamma=\max_{i}\left(\frac{\|(\mathbf{U}_{W})_{i,:}\|_{2}^{2}}{\|(\mathbf{U}_{W^{*}})_{i,:}\|_{2}^{2}}\right) $, 
 then with probability at least $1-\delta$, $1-\epsilon\leq \sigma_{i}^{2}(\mathbf{S} {\mathbf{U}_{W}})\leq 1+\epsilon$
\end{lemma}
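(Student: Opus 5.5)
We would prove Lemma~\ref{embedding for W} with the standard matrix Chernoff argument used for leverage-score sampling (as in \cite{drineas2012fast,larsen2022practical}). The only change is that the sampling probabilities come from $\mathbf{W}^{*}$ rather than from $\mathbf{W}$, and $\gamma$ absorbs the mismatch.

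Write $\mathbf{U}=\mathbf{U}_{W}\in\mathbb{R}^{m\times R}$ and $\mathbf{u}_i=\mathbf{U}_{i,:}^{T}$. Since $\mathbf{S}$ has i.i.d.\ rows, we can decompose
\[
\mathbf{U}^{T}\mathbf{S}^{T}\mathbf{S}\mathbf{U}=\sum_{j=1}^{s}\mathbf{X}_j,\qquad \mathbf{X}_j=\frac{1}{s\,p_{\psi_j}}\mathbf{u}_{\psi_j}\mathbf{u}_{\psi_j}^{T}.
\]
This is a sum of independent positive semidefinite $R\times R$ matrices. Each has mean
\[
\mathbb{E}\mathbf{X}_j=\frac1s\sum_{i}\mathbf{u}_i\mathbf{u}_i^{T}=\frac1s\mathbf{U}^{T}\mathbf{U}=\frac1s\mathbf{I}_R .
\]
This step uses $p_i>0$ for all $i$, or the smoothed probabilities \eqref{perturbed probabilities}, so that the estimator is unbiased. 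The claim $1-\epsilon\le\sigma_i^2(\mathbf{S}\mathbf{U})\le 1+\epsilon$ is therefore equivalent to $\|\mathbf{U}^{T}\mathbf{S}^{T}\mathbf{S}\mathbf{U}-\mathbf{I}\|_2\le\epsilon$.

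The key step is a uniform bound on the summands. We would use the hypothesis $p_i\ge\beta\,l_i(\mathbf{W}^{*})/R$ together with the definition of $\gamma$:
\[
\|\mathbf{X}_j\|_2=\frac{\|\mathbf{u}_{\psi_j}\|_2^2}{s\,p_{\psi_j}}\le\frac{R}{s\beta}\cdot\frac{\|(\mathbf{U}_{W})_{\psi_j,:}\|_2^2}{\|(\mathbf{U}_{W^{*}})_{\psi_j,:}\|_2^2}\le\frac{\gamma R}{\beta s}=:L .
\]
The same bound gives the centered version $\|\mathbf{X}_j-\mathbb{E}\mathbf{X}_j\|_2\le\max(L,1/s)\le L$, where we note $\gamma R/\beta\ge1$. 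To see this, sum $\|\mathbf{u}_i\|^2\le\gamma\|(\mathbf{U}_{W^*})_{i,:}\|^2$ over $i$ to get $R\le\gamma R$, so $\gamma\ge1$. For the variance we would use $\mathbf{X}_j^2\preceq L\,\mathbf{X}_j$, which gives
\[
\Big\|\sum_j\mathbb{E}(\mathbf{X}_j-\mathbb{E}\mathbf{X}_j)^2\Big\|_2\le\Big\|\sum_j\mathbb{E}\mathbf{X}_j^2\Big\|_2\le L .
\]

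Next we apply the matrix Bernstein inequality (or equivalently the two-sided matrix Chernoff bound) to $\sum_j(\mathbf{X}_j-\mathbb{E}\mathbf{X}_j)$:
\[
\Pr\Big(\Big\|\sum_j\mathbf{X}_j-\mathbf{I}\Big\|_2\ge\epsilon\Big)\le 2R\exp\!\Big(\frac{-\epsilon^2/2}{L+L\epsilon/3}\Big)\le 2R\exp\!\Big(-\frac{3\epsilon^2\beta s}{8\gamma R}\Big)
\]
for $\epsilon\le1$. Setting the right-hand side to at most $\delta$ requires $s\ge \frac{8\gamma R\ln(2R/\delta)}{3\beta\epsilon^2}$. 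This is comfortably implied by the stated $s>144\gamma R\ln(2R/\delta)/(\beta\epsilon^2)$; the constant $144$ matches the cruder version of the bound used in \cite{drineas2012fast,larsen2022practical}, so we can either quote that result directly with $R/(\beta p_i)$ replaced by the bound $L$ above, or keep our sharper constant and remark that it implies the lemma.

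The main obstacle is the uniform bound on $\|\mathbf{X}_j\|_2$. Everything hinges on controlling $\|(\mathbf{U}_W)_{i,:}\|^2/p_i$ using probabilities built from $\mathbf{W}^{*}$, which is exactly what $\gamma$ encodes. Two points need care here. If some $\|(\mathbf{U}_{W^*})_{i,:}\|=0$ while $\|(\mathbf{U}_W)_{i,:}\|\neq0$, then $\gamma=\infty$ and the lemma is vacuous; with the smoothed probabilities one would instead get a $\gamma$ involving $\tilde p_i$. We would also check that no hidden use of $\sum_i p_i$ related to $\mathbf{W}$ is needed; it is not, because unbiasedness only requires $p_i>0$.
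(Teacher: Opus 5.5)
Your proposal is correct and follows essentially the same route as the paper. Both apply matrix Bernstein to $\mathbf{I}-(\mathbf{S}\mathbf{U}_W)^T\mathbf{S}\mathbf{U}_W$, using the uniform bound $\|(\mathbf{U}_W)_{i,:}\|_2^2/(s\,p_i)\le \gamma R/(\beta s)$ and the per-term variance bound $\frac{\gamma R}{\beta s^2}\mathbf{I}$; your $\max(L,1/s)$ in place of the paper's $\frac{1}{s}(1+\gamma R/\beta)$ and your sharper constant are cosmetic. Your explicit restriction to $\epsilon\le 1$ is appropriate, since the paper's proof tacitly needs bounded $\epsilon$ and the lemma is only used with $\epsilon_0=1-1/\sqrt{2}$, and your operator-norm formulation gives every $\sigma_i$ at once, so the paper's separate step from $i=1$ to all $i$ is unnecessary.
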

\begin{proof}
    The proof follows using Bernstein inequality. 

    Define $\mathbf{I}-\left(\mathbf{SU}_{W}\right)^{T}\left(\mathbf{SU}_{W}\right)=\sum_{t=1}^{s}\mathbf{Z}_{t}=\sum_{t=1}^{s}(\frac{1}{s}\mathbf{I}-\mathbf{X}_{t})$ where $\mathbf{X}_{t}=\frac{1}{s\mathbf{p}_{i_{t}}}(\mathbf{U}_{W})_{i_{t},:}^{T}(\mathbf{U}_{W})_{i_{t},:}$ where $i_{t}$ is the index sampled at step $t$. Then we have the following two points: 
    \begin{itemize}
        \item $\mathbb{E}\left[\mathbf{Z}_{t}\right] = \frac{1}{s}\mathbf{I}-\frac{1}{s}\mathbf{U}_{W}^{T}\mathbf{U}_{W}=0$
        \item $\left\|\mathbf{Z}_{t}\right\|_{2}\leq \frac{1}{s}+\frac{1}{s p_{i_{t}}}\|(\mathbf{U}_{W})_{i_{t},:}\|^{2}_{2} \leq \frac{1}{s}(1+\frac{R}{\beta}\gamma)$, where $\gamma =\max_{i \\ /p_{i}\neq 0}\left(\frac{\|(\mathbf{U}_{W})_{i,:}\|_{2}^{2}}{\|(\mathbf{U}_{W^{*}})_{i,:}\|_{2}^{2}}\right)$. Let $L = \frac{1}{s}(1+\frac{R}{\beta}\gamma)$.
        \item  Using independence of the variable $\mathbf{X}_{t}$ we have that $\sigma^{2} = \|\mathbb{E}(\sum_{t=1}^{s}\mathbf{Z}_{t})^{2}\|_{2} =\left\|\sum_{t=1}^{s}\mathbb{E}[\mathbf{Z}_{t}^{2}]\right\|_{2}\leq \sum_{t=1}^{s}\|\mathbb{E}[\mathbf{Z}_{t}^{2}]\|_{2}$. We start first by calculating $\mathbb{E}[\mathbf{X}_{t}^{2}]$. 
        \begin{align*}
            \mathbb{E}[\mathbf{X}_{t}^{2}]&=\frac{1}{s^{2}}\sum_{i=1}^{m}\frac{1}{\mathbf{p}_{i}}((\mathbf{U}_{W})_{i,:}^{T}(\mathbf{U}_{W})_{i,:})((\mathbf{U}_{W})_{i,:}^{T}(\mathbf{U}_{W})_{i,:}) \leq \frac{\gamma R}{\beta s^{2}}\mathbf{I}
        \end{align*}
        As $\mathbb{E}[\mathbf{Z}_{t}^{2}]=\mathbb{E}[\mathbf{X}_{t}^{2}]-\left(\mathbb{E}[\mathbf{X}_{t}]\right)^{2}\leq \frac{1}{s^{2}}\left(\frac{\gamma R}{\beta }-1\right)\mathbf{I}$, the final variance is $\sigma^{2}\leq \frac{1}{s}\left|\frac{\gamma R}{\beta}-1\right|$
    \end{itemize}
    Using the above parameters and the Bernstein inequality we have 
    \[P\left\{\left\|(\mathbf{SU}_{W})^{T}\mathbf{SU}_{W}-\mathbf{I}\right\|_{2}\geq t \right\}\leq (2R)\exp\left(\frac{-t^{2}/2}{\sigma^{2}+Lt/3}\right)\] For $t=\epsilon$ and the given value of $s$, we have then with probability at least $1-\delta$, $1-\epsilon\leq \sigma_{i}^{2}(\mathbf{S} \mathbf{U}_{W})\leq 1+\epsilon$ for $i=1$. The inequality then follows for all $i$ using Theorem 2.2 in \cite{grigori2025randomized}.
\end{proof}
\begin{lemma}
    \label{SC1_W}
    Let $\epsilon,\delta>0$, $\mathbf{W}=\mathbf{U}_{W}\mathbf{\Sigma}_{W} \mathbf{V}_{W}^{T}\in \mathbb{R}^{m\times R}$ and $\mathbf{W^{*}}\in \mathbb{R}^{m\times R}$ with row leverage scores $l_{i}(\mathbf{W^{*}})$. Let $\mathbf{S}\in \mathbb{R}^{s\times m}$ is a scaled row sampling matrix according to a probability distribution $\mathbf{p}\in [0,1]^{m}$, $\mathbf{p}_{i}\geq \beta l_{i}(\mathbf{W^{*}})/R$ for some $\beta\in (0,1]$. Then for  $s>144\gamma R\ln(2R/\delta)/(\beta \epsilon_{0}^{2})$, where $\gamma = \max_{i}\left(\frac{\|(\mathbf{U}_{W})_{i,:}\|_{2}^{2}}{\|(\mathbf{U}_{W^{*}})_{i,:}\|_{2}^{2}}\right) $ and $\epsilon_{0}=1-\frac{1}{\sqrt{2}}$,
, $\sigma_{min}^{2}(\mathbf{S}\mathbf{U}_{W})\geq \frac{1}{\sqrt{2}}$ with probability at least $1-\delta$
\end{lemma}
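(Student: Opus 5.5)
This is a direct corollary of Lemma~\ref{embedding for W}, specialized to the distortion $\epsilon_{0}=1-\frac{1}{\sqrt{2}}$. The plan is to apply that lemma verbatim and keep only its lower spectral bound.

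The hypotheses match exactly. Both lemmas use the same $\mathbf{W}$, $\mathbf{W}^{*}$, the same scaled row sampling matrix $\mathbf{S}$, the same distribution $\mathbf{p}$ with $\mathbf{p}_{i}\geq \beta l_{i}(\mathbf{W}^{*})/R$, and the same coherence ratio $\gamma$. Since $\epsilon_{0}=1-1/\sqrt{2}\approx 0.293$ lies in $(0,1)$, it is an admissible value of $\epsilon$ there. The sample-size condition $s>144\gamma R\ln(2R/\delta)/(\beta\epsilon_{0}^{2})$ is precisely the threshold of Lemma~\ref{embedding for W} with $\epsilon=\epsilon_{0}$.

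That lemma therefore yields, with probability at least $1-\delta$,
\[
1-\epsilon_{0}\leq \sigma_{i}^{2}(\mathbf{S}\mathbf{U}_{W})\leq 1+\epsilon_{0}
\]
for every $i$, in particular for the smallest singular value. Discarding the upper bound and substituting $\epsilon_{0}$ gives
\[
\sigma_{\min}^{2}(\mathbf{S}\mathbf{U}_{W})\geq 1-\epsilon_{0}=\frac{1}{\sqrt{2}},
\]
which is the structural condition \eqref{structural_cond_1} of Theorem~\ref{Sketched_lS_Theorem}.

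The only step needing any care is confirming that the lower bound on $\sigma_{\min}$ really follows from Lemma~\ref{embedding for W}, that is, that the lemma controls all singular values and not only the largest one. This holds because the Bernstein argument in that proof bounds $\|(\mathbf{S}\mathbf{U}_{W})^{T}\mathbf{S}\mathbf{U}_{W}-\mathbf{I}\|_{2}\leq\epsilon$. That bound places every eigenvalue of the Gram matrix in $[1-\epsilon,1+\epsilon]$, so the bound on $\sigma_{\min}$ follows directly. Beyond this, nothing needs verifying except the arithmetic $1-\epsilon_{0}=1/\sqrt{2}$.
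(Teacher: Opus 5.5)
Your proposal is correct and matches the paper's proof: the paper also proves this lemma in one line, by invoking Lemma~\ref{embedding for W} with $\epsilon=\epsilon_{0}$ and reading off $\sigma_{\min}^{2}(\mathbf{S}\mathbf{U}_{W})\geq 1-\epsilon_{0}=1/\sqrt{2}$. Your remark that the Bernstein bound on $\|(\mathbf{S}\mathbf{U}_{W})^{T}\mathbf{S}\mathbf{U}_{W}-\mathbf{I}\|_{2}$ already places every eigenvalue of the Gram matrix in $[1-\epsilon_{0},1+\epsilon_{0}]$ justifies the bound for all $i$ more directly than the external result the paper cites for that step.
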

\begin{proof}
    The proof follows by applying Lemma \ref{embedding for W} for the given $\epsilon_{0}$
\end{proof}
For the second structural condition, we follow a similar approach as in \cite{larsen2022practical}. We start first by stating the expected value of the matrix-matrix multiplication  approximation by sampling according to probabilities $\mathbf{p}$ as stated in \cite{drineas2006fast}.
\begin{lemma} [Lemma  4 in \cite{drineas2006fast}]
    \label{bound om expected value}
    Let $\mathbf{A}\in \mathbb{R}^{R \times m}$ and $\mathbf{B}\in \mathbb{R}^{m \times n} $ and $\mathbf{p}_{i}$ be a probability distribution. Construct a scaled sampling matrix $S\in \mathbb{R}^{s \times m}$ such that it samples $s$ rows i.i.d according to the probability distribution $\mathbf{p}$ to form the approximate matrix multiplication $\mathbf{AS}^{T}\mathbf{SB}$ to $\mathbf{AB}$. Then 
    \[\mathbb{E}\left[\left\|\mathbf{AS}^{T}\mathbf{SB}-\mathbf{AB}\right\|_{F}^{2}\right]=\sum_{i=1}^{m}\frac{\|\mathbf{A}_{:,i}\|_{2}^{2}\|\mathbf{B}_{i,:}\|_{2}^{2}}{s \mathbf{p}_{i}}-\frac{1}{s}\|\mathbf{AB}\|_{F}^{2}\]
\end{lemma}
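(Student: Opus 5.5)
We prove the identity by writing the sketched product as an average of $s$ i.i.d. rank-one terms and computing the expected squared Frobenius error as a variance, entry by entry. With the row selection matrix $\mathbf{S}$ defined as above, with $\psi_t$ the index drawn at trial $t$, we have
\[
\mathbf{A}\mathbf{S}^{T}\mathbf{S}\mathbf{B}=\sum_{t=1}^{s}\frac{1}{s\,\mathbf{p}_{\psi_t}}\mathbf{A}_{:,\psi_t}\mathbf{B}_{\psi_t,:}=\frac{1}{s}\sum_{t=1}^{s}\mathbf{Y}_t,
\qquad \mathbf{Y}_t=\frac{1}{\mathbf{p}_{\psi_t}}\mathbf{A}_{:,\psi_t}\mathbf{B}_{\psi_t,:}.
\]
The $\mathbf{Y}_t$ are i.i.d. because the $\psi_t$ are i.i.d. draws from $\mathbf{p}$. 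We use the standing assumption that $\mathbf{p}_i>0$ for every $i$; otherwise we replace $\mathbf{p}$ by the regularized probabilities \eqref{perturbed probabilities}.

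First we check unbiasedness. Directly,
\[
\mathbb{E}[\mathbf{Y}_t]=\sum_{i=1}^m \mathbf{p}_i\cdot\frac{1}{\mathbf{p}_i}\mathbf{A}_{:,i}\mathbf{B}_{i,:}=\mathbf{A}\mathbf{B},
\]
so $\mathbb{E}[\mathbf{A}\mathbf{S}^T\mathbf{S}\mathbf{B}]=\mathbf{A}\mathbf{B}$. This is exactly where positivity of every $\mathbf{p}_i$ is needed, since no index may be missed.

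Next we fix an entry $(j,k)$ and apply the variance of a sample mean of i.i.d. scalars:
\[
\mathbb{E}\big[(\mathbf{A}\mathbf{S}^T\mathbf{S}\mathbf{B}-\mathbf{A}\mathbf{B})_{jk}^2\big]=\frac{1}{s}\mathrm{Var}\big((\mathbf{Y}_1)_{jk}\big)=\frac{1}{s}\Big(\mathbb{E}[(\mathbf{Y}_1)_{jk}^2]-(\mathbf{A}\mathbf{B})_{jk}^2\Big).
\]
Here the cross terms between different trials vanish by independence and zero mean. The second moment is
\[
\mathbb{E}[(\mathbf{Y}_1)_{jk}^2]=\sum_{i=1}^m \mathbf{p}_i\,\frac{\mathbf{A}_{ji}^2\mathbf{B}_{ik}^2}{\mathbf{p}_i^2}=\sum_{i=1}^m\frac{\mathbf{A}_{ji}^2\mathbf{B}_{ik}^2}{\mathbf{p}_i}.
\]

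Finally we sum over all $j$ and $k$. Since $\sum_j \mathbf{A}_{ji}^2=\|\mathbf{A}_{:,i}\|_2^2$ and $\sum_k\mathbf{B}_{ik}^2=\|\mathbf{B}_{i,:}\|_2^2$, the first term becomes $\sum_i \|\mathbf{A}_{:,i}\|_2^2\|\mathbf{B}_{i,:}\|_2^2/(s\mathbf{p}_i)$. The second term sums to $\frac{1}{s}\|\mathbf{A}\mathbf{B}\|_F^2$. Together these give the claimed formula.

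No step is a real obstacle. The only points that need care are (i) the nonzero-probability assumption, which guarantees unbiasedness, and (ii) using the scaling $1/\sqrt{s\mathbf{p}_i}$ so that the squared factors combine to the $1/(s\mathbf{p}_i)$ weight in $\mathbf{Y}_t/s$. These are bookkeeping; the argument is otherwise a direct computation.
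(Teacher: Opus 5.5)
Your proof is correct: writing $\mathbf{A}\mathbf{S}^T\mathbf{S}\mathbf{B}$ as the mean of $s$ i.i.d. unbiased rank-one estimators, computing each entry's variance, and summing over entries gives exactly the stated identity. You also rightly read the scaling as $1/\sqrt{s\mathbf{p}_{\psi_t}}$ and need $\mathbf{p}_i>0$ for the formula to make sense. The paper gives no proof of its own and imports this as Lemma 4 of Drineas, Kannan and Mahoney, whose argument is this same entrywise mean/variance computation, so your approach is the cited one.
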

\begin{lemma} 
\label{matrix approximation for W}
     Let $\delta>0$, $\mathbf{W}=\mathbf{U}_{W}\mathbf{\Sigma}_{W} \mathbf{V}_{W}^{T}\in \mathbb{R}^{m\times R}$ and $\mathbf{W^{*}}\in \mathbb{R}^{m\times R}$ with row leverage scores $l_{i}(\mathbf{W^{*}})$. If $\mathbf{S}\in \mathbb{R}^{s\times m}$ is a scaled row sampling matrix according to a probability distribution $\mathbf{p}\in [0,1]^{m}$, $\mathbf{p}_{i}\geq \beta l_{i}(\mathbf{W^{*}})/R$ for some $\beta\in (0,1]$. If $s\geq \frac{2R\gamma}{\beta \delta \epsilon}$, where $\gamma = \max_{i}\frac{\|\mathbf{(U}_{W})_{i,:}\|_{2}^{2}}{\|(\mathbf{U}_{W^{*}})_{i,:}\|_{2}^{2}}$ then with probability at least $1-\delta$, 
\[\left\|\mathbf{U}_{W}^{T}\mathbf{S}^{T}\mathbf{S}\mathbf{T}^{\perp}\right\|_{F}^{2}\leq\frac{\epsilon \|\mathbf{T}^{\perp}\|_{F}^{2}}{2}\]
\end{lemma}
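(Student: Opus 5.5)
Our plan is the standard approximate-matrix-multiplication argument: bound the expectation via Lemma \ref{bound om expected value}, then convert it to a high-probability statement with Markov's inequality. Set $\mathbf{A}=\mathbf{U}_{W}^{T}\in\mathbb{R}^{R\times m}$ and $\mathbf{B}=\mathbf{T}^{\perp}\in\mathbb{R}^{m\times n}$. The key observation is that $\mathbf{AB}=\mathbf{U}_{W}^{T}\mathbf{U}_{W}^{\perp}(\mathbf{U}_{W}^{\perp})^{T}\mathbf{T}=0$, since $\mathbf{T}^{\perp}$ lies in the orthogonal complement of $\mathrm{range}(\mathbf{W})$. Hence the quantity we want to control is exactly the error $\|\mathbf{AS}^{T}\mathbf{SB}-\mathbf{AB}\|_{F}^{2}$. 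Moreover, the subtracted term $\frac{1}{s}\|\mathbf{AB}\|_F^2$ in Lemma \ref{bound om expected value} vanishes, leaving
\[
\mathbb{E}\left[\left\|\mathbf{U}_{W}^{T}\mathbf{S}^{T}\mathbf{S}\mathbf{T}^{\perp}\right\|_{F}^{2}\right]=\sum_{i=1}^{m}\frac{\|(\mathbf{U}_{W})_{i,:}\|_{2}^{2}\,\|\mathbf{T}^{\perp}_{i,:}\|_{2}^{2}}{s\,\mathbf{p}_{i}}.
\]
We use the positivity assumption on $\mathbf{p}$ (or the regularized $\tilde{p}_i$) so that the estimator is unbiased and every term is well defined.

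Next we bound each ratio. From $\mathbf{p}_i\geq \beta l_i(\mathbf{W}^*)/R=\beta\|(\mathbf{U}_{W^*})_{i,:}\|_2^2/R$ and the definition of $\gamma$,
\[
\frac{\|(\mathbf{U}_{W})_{i,:}\|_{2}^{2}}{\mathbf{p}_{i}}\leq \frac{R}{\beta}\,\frac{\|(\mathbf{U}_{W})_{i,:}\|_{2}^{2}}{\|(\mathbf{U}_{W^{*}})_{i,:}\|_{2}^{2}}\leq \frac{R\gamma}{\beta}.
\]
Summing over $i$ gives $\mathbb{E}[\cdot]\leq \frac{R\gamma}{\beta s}\sum_i\|\mathbf{T}^{\perp}_{i,:}\|_2^2=\frac{R\gamma}{\beta s}\|\mathbf{T}^{\perp}\|_F^2$.

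Finally, Markov's inequality gives, with probability at least $1-\delta$,
\[
\|\mathbf{U}_{W}^{T}\mathbf{S}^{T}\mathbf{S}\mathbf{T}^{\perp}\|_F^2\leq \frac{R\gamma}{\beta s\delta}\|\mathbf{T}^{\perp}\|_F^2 .
\]
Substituting $s\geq \frac{2R\gamma}{\beta\delta\epsilon}$ makes the right-hand side at most $\frac{\epsilon}{2}\|\mathbf{T}^{\perp}\|_F^2$, which is the claim.

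We expect the main obstacle to be the ratio step. It is the only point where the mismatch between sampling from $\mathbf{W}^*$ and the current $\mathbf{W}$ enters. We must take care that $\gamma$ is finite, which requires $\|(\mathbf{U}_{W^*})_{i,:}\|_2>0$ whenever $(\mathbf{U}_W)_{i,:}\neq 0$. If this fails, we fall back on the smoothed probabilities $\tilde p_i$: there we bound $\tilde p_i\geq (1-\varepsilon)\beta l_i(\mathbf{W}^*)/R$, absorb the harmless factor $(1-\varepsilon)$, and restrict the maximum defining $\gamma$ to indices with $p_i\neq 0$, as in the proof of Lemma \ref{embedding for W}.
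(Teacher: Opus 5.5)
Your proposal is correct and follows the paper's proof essentially step for step. You apply the Drineas expectation identity with $\mathbf{A}=\mathbf{U}_W^T$ and $\mathbf{B}=\mathbf{T}^{\perp}$, using $\mathbf{U}_W^T\mathbf{T}^{\perp}=0$, then bound each ratio by $R\gamma/\beta$ via $\mathbf{p}_i\geq\beta l_i(\mathbf{W}^*)/R$, and finish with Markov's inequality.

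One small remark concerns your smoothing fallback. It is not needed: finite $\gamma$ already forces $(\mathbf{U}_W)_{i,:}=0$ whenever $p_i=0$, so those indices contribute nothing and the estimator stays unbiased. As written it would also not work, since indices with $p_i=0$ get $\tilde p_i=\varepsilon/m$ and their terms are not controlled by a $\gamma$ restricted to $p_i\neq 0$.
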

\begin{proof}
   We have the following points.
    \begin{itemize}
        \item Using Lemma \ref{bound om expected value} \[\mathbb{E}\left[\|\mathbf{U}_{W}^{T}\mathbf{S}^{T}\mathbf{S}\mathbf{T}^{\perp}-\mathbf{U}_{W}^{T}\mathbf{T}^{\perp}\|_{F}^{2}\right]=\sum_{i=1}^{m}\frac{\left\|(\mathbf{U}_{W}^{T})_{:,i}\right\|^{2}\left\|\mathbf{T}^{\perp}_{i,:}\right\|^{2}}{s\mathbf{p}_{i}}-\frac{1}{s}\|\mathbf{U}_{W}^{T}\mathbf{T}^{\perp}\|_{F}^{2}\]
        By substituting the value of $\mathbf{p}$, using the value  of $\gamma$ as in Lemma \ref{embedding for W} and the fact that $\mathbf{U}_{\mathbf{W}}^{T}$, we have 
        \begin{align*} \mathbb{E}\left[\|\mathbf{U}_{W}^{T}\mathbf{S}^{T}\mathbf{S}\mathbf{T}^{\perp}-\mathbf{U}_{W}^{T}\mathbf{T}^{\perp}\|_{F}^{2}\right]&\leq \frac{R\gamma}{\beta s }\|\mathbf{T}^{\perp}\|_{F}^{2}
        \end{align*}
        \item Using Markov's inequality, $\mathbf{Pr}[\left\|\mathbf{U}_{W}^{T}\mathbf{S}^{T}\mathbf{S}\mathbf{T}^{\perp}\right\|_{F}^{2}>t]\leq \frac{\mathbb{E}\left[\left\|\mathbf{U}_{\mathbf{W}}^{T}\mathbf{S}^{T}\mathbf{S}\mathbf{T}^{\perp}\right\|_{F}^{2}\right]}{t}$. By substituting the value of $\mathbb{E}\left[\left\|\mathbf{U}_{W}^{T}\mathbf{S}^{T}\mathbf{S}\mathbf{T}^{\perp}\right\|_{F}^{2}\right]$ and applying it for $t = \frac{\epsilon \|\mathbf{T}^{\perp}\|_{F}^{2}}{2}$ we get
        \[\mathbf{Pr}\left[\left\|\mathbf{U}_{W}^{T}\mathbf{S}^{T}\mathbf{S}\mathbf{T}^{\perp}\right\|_{F}^{2}>\frac{\epsilon \|\mathbf{T}^{\perp}\|_{F}^{2}}{2}\right]\leq \frac{2R\gamma}{\beta \epsilon s}\]
    \end{itemize}
    The proof finished by substituting the value of $s$ given.
\end{proof}


\begin{theorem}
\label{main theorem}
    Consider the least square problem $\|\mathbf{WF}^{T}-\mathbf{T}\|_{F}^{2}$ with $\mathbf{T} = \mathbf{W^{*}\mathbf{F}^{*}}^{T}$. Let $\mathbf{p}$ be a probability distribution proportional to the leverage scores of $\mathbf{W^{*}}$, $\mathbf{p}_{i}\geq \beta l_{i}(\mathbf{W}^{*})/R$ and $\mathbf{S}\in \mathbb{R}^{s\times m}$ be a scaled i.i.d row sampling matrix according to $\mathbf{p}$. For $\epsilon,\delta>0$, if $s\geq R\gamma/\beta \max\left(\frac{4}{\delta \epsilon},\frac{144\ln(2R/\delta)}{\epsilon_{0}^{2}}\right)$, where $\gamma=\max_{i}\left(\frac{\|(\mathbf{U}_{W})_{i,:}\|_{2}^{2}}{\|(\mathbf{U}_{W^{*}})_{i,:}\|_{2}^{2}}\right) $, then the solution to the sketched least square problem \eqref{sketched_least_square}, $\mathbf{F}_{sk}$, and the optimal solution $\mathbf{F}_{opt}$ satisfies the following bounds
    \begin{equation*}
    \begin{aligned}
        \|\mathbf{WF}_{sk}^{T}-\mathbf{T}\|_{F}^{2}
        \le (1+\epsilon)\|\mathbf{WF}_{opt}^{T}-\mathbf{T}\|_{F}^{2} \\[6pt]
        \|\mathbf{F}_{opt}-\mathbf{F}_{sk}\|_{F}^{2}
        \le \frac{\epsilon\,\|\mathbf{WF}_{opt}^{T}-\mathbf{T}\|_{F}^{2}}{\sigma_{\min}^{2}(\mathbf{W})}
    \end{aligned}
    \end{equation*}
\end{theorem}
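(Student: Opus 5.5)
The proof will combine Lemma~\ref{SC1_W} and Lemma~\ref{matrix approximation for W} through a union bound, and then apply Theorem~\ref{Sketched_lS_Theorem}.

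Since Theorem~\ref{Sketched_lS_Theorem} is deterministic once the two structural conditions \eqref{structural_cond_1} and \eqref{structural_cond_2} hold, the task reduces to showing that both conditions hold simultaneously with probability at least $1-\delta$. To arrange this, I would split the failure probability and apply each lemma with failure parameter $\delta/2$.

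\textbf{First structural condition.} Apply Lemma~\ref{SC1_W} with $\epsilon_0=1-1/\sqrt{2}$ and failure probability $\delta/2$. This gives $\sigma_{\min}^2(\mathbf{S}\mathbf{U}_W)\ge 1/\sqrt{2}$ with probability at least $1-\delta/2$, provided
\[
s>\frac{144\gamma R\ln(4R/\delta)}{\beta\epsilon_0^2}.
\]
The stated bound contains $\ln(2R/\delta)$ rather than $\ln(4R/\delta)$, so there are two options. One is to absorb the constant by noting that the Bernstein estimate in Lemma~\ref{embedding for W} has slack in the constant $144$; the cleanest check is to plug $\sigma^2\le \gamma R/(\beta s)$ and $L\le 2\gamma R/(\beta s)$ into the Bernstein tail and verify it. The other is to state the union bound loosely, which is what I expect the authors intend. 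Here $\gamma\ge 1$ can be used freely, since $\sum_i\|(\mathbf{U}_W)_{i,:}\|^2=\sum_i\|(\mathbf{U}_{W^*})_{i,:}\|^2=R$ forces the maximum ratio to be at least one. In particular $1+R\gamma/\beta\le 2R\gamma/\beta$.

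\textbf{Second structural condition.} Apply Lemma~\ref{matrix approximation for W} with failure probability $\delta/2$. The lemma requires
\[
s\ge \frac{2R\gamma}{\beta(\delta/2)\epsilon}=\frac{4R\gamma}{\beta\delta\epsilon},
\]
which is exactly the first term in the stated maximum. It then gives
\[
\|\mathbf{U}_W^T\mathbf{S}^T\mathbf{S}\mathbf{T}^\perp\|_F^2\le \frac{\epsilon}{2}\,\|\mathbf{T}^\perp\|_F^2=\frac{\epsilon}{2}\,\mathcal{R}^2,
\]
using the identity $\mathcal{R}^2=\|\mathbf{T}^\perp\|_F^2$ derived at the start of this section. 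This is precisely \eqref{structural_cond_2}. This step also explains the factor $4$ in the theorem, which is the lemma's factor $2$ divided by $\delta/2$.

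\textbf{Conclusion and main obstacle.} By the union bound, both events hold with probability at least $1-\delta$, and Theorem~\ref{Sketched_lS_Theorem} then yields both displayed bounds for $\mathbf{F}_{sk}$ versus $\mathbf{F}_{opt}$. I expect the main obstacle to be the probability bookkeeping for the first condition, namely reconciling the $\ln(2R/\delta)$ in the statement with the $\delta/2$ split. My first attempt would be to redo the Bernstein computation of Lemma~\ref{embedding for W} with the explicit constants at $t=\epsilon_0$, check that $144$ leaves enough room to replace $\delta$ by $\delta/2$ inside the logarithm, and, if it does not, note that the statement holds with $\ln(4R/\delta)$.

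A secondary point to note is that $\mathbf{T}=\mathbf{W}^*\mathbf{F}^{*T}$ is used only through $\mathbf{T}^\perp$. The leverage scores of $\mathbf{W}^*$ enter only through the ratio $\gamma$, which the lemmas already handle, so no further structure is needed.
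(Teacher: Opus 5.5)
Your proposal is correct and matches the paper's proof: it applies Lemma~\ref{SC1_W} and Lemma~\ref{matrix approximation for W} each at failure level $\delta/2$, takes a union bound, and invokes Theorem~\ref{Sketched_lS_Theorem}. The paper does not address the $\ln(2R/\delta)$ versus $\ln(4R/\delta)$ discrepancy you flag, and your first fix does go through: with $\sigma^2\le R\gamma/(\beta s)$ and $L\le 2R\gamma/(\beta s)$, the Bernstein exponent at $t=\epsilon_0$ under the stated $s$ is at least $60\ln(2R/\delta)$, so the failure probability is at most $2R\,(\delta/(2R))^{60}\le\delta/2$.
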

\begin{proof}
    Using Lemma \ref{SC1_W}, we have that the first structural condition \eqref{structural_cond_1} holds with probability at least $1-\delta/2$. Similarly, using Lemma \ref{matrix approximation for W}, we have that the second structural condition \eqref{structural_cond_2} holds with probability at least $1-\delta/2$. Then the two conditions \eqref{structural_cond_1} and \eqref{structural_cond_2} both hold for probability at least $1-\delta$. The proof finished by applying Theorem \ref{Sketched_lS_Theorem}
\end{proof}

Theorem \ref{main theorem} gives a guarantee at each ALS iteration, if a sampling matrix was drawn independently according to leverage scores of $\mathbf{W}^{*}$. It is important to note that in the full ALS iterations, these $\epsilon-$errors can accumulate. Theorem \ref{main theorem} also 
shows that the number of samples depends on how much $\mathbf{W}$ is aligned with $\mathbf{W^{*}}$. For completeness, we discuss that during ALS optimization, as the factor matrices converge to their true values, the KRP $\mathbf{W}$ converges to the exact matrix $\mathbf{W}^{*}$. This convergence guarantees that the two associated column spaces become increasingly aligned, leading to a decrease in $\gamma$ between iterations.

\begin{proposition}
\label{krp_convergence}
    Let $\mathbf{F}_{k}^{t}$ denote the factor matrices obtained when solving the alternating least squares problem and $\mathbf{F}_{k}^{*}$ be the exact solution. For any $\epsilon_{o}>0$, assume that $\mathbf{F}_{k}^{t}$ converges to $\mathbf{F}_{k}^{*}$, i.e. there exist $t^{0}_{k}(\epsilon_{0})$ such that $\forall t\geq t^{0}_{k}$, $\left\|\mathbf{F}_{k}^{*}\mathbf{PD}_{k}-\mathbf{F}_{k}^{t}\right\|\leq \frac{\epsilon_{0}}{\left(\sum_{j=1}^{N}\|\mathbf{F}_{j}^{*}\mathbf{PD}_{j}\|\right)}$, with $\mathbf{P}$ and $\mathbf{D}_{k}$ are the permutation and scaling matrices, then the KRP of the factor matrices, $\mathbf{W}_{k}^{t}$, converges to the actual KRP $\mathbf{W}_{k}^{*}\mathbf{PD}$ with $\mathbf{D}=\prod_{j=1, j\neq k}^{N}\mathbf{D}_{j}.$
\end{proposition}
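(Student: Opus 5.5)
The plan is to prove convergence of the Khatri--Rao product by a telescoping argument that exploits its multilinearity. Write $\mathbf{G}_j^{*}=\mathbf{F}_j^{*}\mathbf{P}\mathbf{D}_j$. Since the Khatri--Rao product acts column by column, $(\mathbf{A}\odot\mathbf{B})\mathbf{P}\mathbf{D}=(\mathbf{A}\mathbf{P}\mathbf{D}_a)\odot(\mathbf{B}\mathbf{P}\mathbf{D}_b)$ whenever $\mathbf{D}=\mathbf{D}_a\mathbf{D}_b$ are diagonal. The first step is therefore to note that
\[
\mathbf{W}_k^{*}\mathbf{P}\mathbf{D}=\bigodot_{j\neq k}\mathbf{G}_j^{*},
\qquad
\mathbf{W}_k^{t}=\bigodot_{j\neq k}\mathbf{F}_j^{t},
\]
with the same ordering of factors in both products. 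The claim then reduces to showing that $\|\mathbf{W}_k^{*}\mathbf{P}\mathbf{D}-\mathbf{W}_k^{t}\|$ is at most a constant times $\epsilon_0$ for all $t\geq \max_{j\neq k} t_j^0(\epsilon_0)$.

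The second step is the telescoping decomposition. Let $j_1<\dots<j_{N-1}$ be the indices different from $k$. Then
\[
\bigodot_{j\neq k}\mathbf{G}_j^{*}-\bigodot_{j\neq k}\mathbf{F}_j^{t}=\sum_{r=1}^{N-1}\mathbf{F}_{j_1}^{t}\odot\cdots\odot\mathbf{F}_{j_{r-1}}^{t}\odot(\mathbf{G}_{j_r}^{*}-\mathbf{F}_{j_r}^{t})\odot\mathbf{G}_{j_{r+1}}^{*}\odot\cdots\odot\mathbf{G}_{j_{N-1}}^{*}.
\]
Each term is bounded using the Frobenius inequality $\|\mathbf{A}\odot\mathbf{B}\|_F\le\|\mathbf{A}\|_F\|\mathbf{B}\|_F$, which holds because $\|\mathbf{a}\otimes\mathbf{b}\|_2=\|\mathbf{a}\|_2\|\mathbf{b}\|_2$ column-wise and then Cauchy--Schwarz over the columns. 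This makes the $r$-th term at most
\[
\|\mathbf{G}_{j_r}^{*}-\mathbf{F}_{j_r}^{t}\|\prod_{i<r}\|\mathbf{F}_{j_i}^{t}\|\prod_{i>r}\|\mathbf{G}_{j_i}^{*}\|.
\]

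The third step is to control the iterate norms. By the triangle inequality, $\|\mathbf{F}_j^{t}\|\le\|\mathbf{G}_j^{*}\|+\epsilon_0/\sum_i\|\mathbf{G}_i^{*}\|$, which is at most $\|\mathbf{G}_j^{*}\|+1$ once $\epsilon_0\le \sum_i\|\mathbf{G}_i^{*}\|$. Let $M=\max_j(\|\mathbf{G}_j^{*}\|+1)$. Then the $r$-th term is at most $M^{N-2}\,\epsilon_0/\sum_i\|\mathbf{G}_i^{*}\|$. Summing over the $N-1$ terms gives
\[
\|\mathbf{W}_k^{*}\mathbf{P}\mathbf{D}-\mathbf{W}_k^{t}\|_F\le \frac{(N-1)M^{N-2}}{\sum_i\|\mathbf{G}_i^{*}\|}\,\epsilon_0=C\epsilon_0,
\]
where $C$ does not depend on $t$ or $\epsilon_0$. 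Because $\epsilon_0$ is arbitrary, this is exactly convergence of $\mathbf{W}_k^{t}$ to $\mathbf{W}_k^{*}\mathbf{P}\mathbf{D}$.

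The main obstacle is bookkeeping rather than analysis. The scaling must be consistent: the product of the $\mathbf{D}_j$ over $j\neq k$ is what appears, and it matches the $\mathbf{D}$ in the statement. The permutation $\mathbf{P}$ must be common to all modes, which is why the statement uses a single $\mathbf{P}$. Finally, the normalization $\epsilon_0/\sum_j\|\mathbf{G}_j^{*}\|$ in the hypothesis only fits the bound cleanly when $N=3$. In that case there are two terms, each of the form $\|\mathbf{G}-\mathbf{F}^{t}\|$ times a single other norm, so the sum yields roughly $\epsilon_0$ plus a term of order $\epsilon_0^2$. For general $N$ I would accept the constant $C$ depending on $N$ and on the true factor norms. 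In the write-up I would state the $N=3$ case sharply and note that the general case gives a constant multiple of $\epsilon_0$, which still yields convergence.
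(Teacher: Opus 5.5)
Your proposal is correct and is essentially the paper's argument. The paper treats the Khatri--Rao product of two factors by a single add-and-subtract of $\mathbf{F}_1^{*}\mathbf{PD}_1\odot\mathbf{F}_2^{t}$, bounds each piece with $\|\mathbf{A}\odot\mathbf{B}\|\le\|\mathbf{A}\|\|\mathbf{B}\|$ and the triangle-inequality bound on $\|\mathbf{F}_2^{t}\|$, and obtains $\epsilon_0+O(\epsilon_0^2)\le 2\epsilon_0$, which is exactly your sharp two-term case; your telescoping sum is the natural $(N-1)$-factor extension of that step and makes explicit the constant $C=(N-1)M^{N-2}/\sum_i\|\mathbf{G}_i^{*}\|$ that the paper leaves implicit by proving only the two-factor case ``for simplicity.''
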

\begin{proof}
    For simplicity we give the proof for the KRP of two factor matrices $\mathbf{F_{1}^{t}}$ and $\mathbf{F_{2}^{t}}$ ($N=2$). For $t\geq \max(t^{0}_{1},t_{2}^{0})$
    \begin{align*}
        \left\|\mathbf{F}_{1}^{t}\odot \mathbf{F}_{2}^{t}-(\mathbf{F}_{1}^{*}\odot\mathbf{F}_{2}^{*})\mathbf{PD}_{1}\mathbf{D}_{2}\right\|&=\left\|\mathbf{F}_{1}^{t}\odot \mathbf{F}_{2}^{t}-\mathbf{F}_{1}^{*}\mathbf{PD}_{1}\odot \mathbf{F}_{2}^{t} +\mathbf{F}_{1}^{*}\mathbf{PD}_{1}\odot \mathbf{F}_{2}^{t}-(\mathbf{F}_{1}^{*}\odot\mathbf{F}_{2}^{*})\mathbf{PD}_{1}\mathbf{D}_{2}\right\|\\
        &\leq \left\|\mathbf{F}_{1}^{t}\odot \mathbf{F}_{2}^{t}-\mathbf{F}_{1}^{*}\mathbf{PD}_{1}\odot \mathbf{F}_{2}^{t}\right\|+\left\|\mathbf{F}_{1}^{*}\mathbf{PD}_{1}\odot \mathbf{F}_{2}^{t}-(\mathbf{F}_{1}^{*}\odot \mathbf{F}_{2}^{*})\mathbf{PD}_{1}\mathbf{D}_{2}\right\|\\
        &\leq\left\|\left(\mathbf{F}_{1}^{t}-\mathbf{F}_{1}^{*}\mathbf{PD}_{1}\right)\odot\mathbf{F}_{2}^{t}\right\|+\left\|\mathbf{F}_{1}^{*}\mathbf{PD}_{1}\odot\left(\mathbf{F}_{2}^{t}-\mathbf{F}_{2}^{*}\mathbf{PD}_{2}\right)\right\|\\
        &\leq \|\mathbf{F}_{1}^{t}-\mathbf{F}_{1}^{*}\mathbf{PD}_{1}\|\|\mathbf{F}_{2}^{t}\|+\|\mathbf{F}_{2}^{t}-\mathbf{F}_{2}^{*}\mathbf{PD}_{2}\|\|\mathbf{F}_{1}^{*}\mathbf{D}_{1}\| \\
        &\leq \frac{\epsilon_{0}}{\left(\|\mathbf{F}_{2}^{*}\mathbf{PD}_{2}\|+\|\mathbf{F}_{1}^{*}\mathbf{PD}_{1}\|\right)}\left(\|\mathbf{F}_{2}^{*}\mathbf{PD}_{2}\|+\|\mathbf{F}_{1}^{*}\mathbf{PD}_{1}\|\right)+\frac{\epsilon_{0}^{2}}{\left(\|\mathbf{F}_{2}^{*}\mathbf{PD}_{2}\|+\|\mathbf{F}_{1}^{*}\mathbf{PD}_{1}\|\right)^{2}}\leq  2\epsilon_{0}
    \end{align*}
 This shows convergence.
\end{proof}

\begin{proposition}
\label{columns_space convergence}
    Let $\epsilon_{0}>0$ and $\mathbf{W}_{k}^{t}$ denote the KRP matrix obtained when solving the alternating least squares problem and $\mathbf{W}_{k}^{*}$ be the exact KRP. Assume that there exists $t_{0}$  such that $\forall t\geq t_{0}$, $\|\mathbf{W}_{k}^{*}\mathbf{PD}_{k}-\mathbf{W}_{k}^{t}\|<\epsilon_{0}$, with $\mathbf{P}$ and $\mathbf{D}$ are the permutation and scaling matrices, then $\|(\mathbf{I}-\mathbf{P}_{\mathbf{W}_{k}^{t}})\mathbf{P}_{\mathbf{W}_{k}^{*}}\|\leq \frac{\epsilon_{0}}{\mathbf{D}_{min}\sigma_{\min}(\mathbf{W}_{k}^{*})}$ where $\mathbf{P}_{\mathbf{W}_{k}^{t}}$ and $\mathbf{P}_{\mathbf{W}_{k}^{*}}$ are the projection matrix onto the column space of $\mathbf{W}_{k}^{t}$ and $\mathbf{W}_{k}^{*}$ respectively $\forall t\geq t_{0}$.
\end{proposition}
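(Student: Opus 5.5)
The plan is to bound the projection difference directly by a perturbation argument. Write $\mathbf{E}=\mathbf{W}_{k}^{t}-\mathbf{W}_{k}^{*}\mathbf{PD}_{k}$, so $\|\mathbf{E}\|<\epsilon_{0}$ for $t\geq t_{0}$. The first step is to note that right-multiplication by the invertible matrix $\mathbf{PD}_{k}$ does not change the column space. Hence $\mathbf{P}_{\mathbf{W}_{k}^{*}}=\mathbf{P}_{\mathbf{W}_{k}^{*}\mathbf{PD}_{k}}$, and we may work with $\mathbf{A}:=\mathbf{W}_{k}^{*}\mathbf{PD}_{k}$. Every vector in the range of $\mathbf{P}_{\mathbf{W}_{k}^{*}}$ has the form $\mathbf{A}\mathbf{x}$, so the $\mathbf{P}_{\mathbf{W}_{k}^{*}}$ factor can be absorbed:
\[
\|(\mathbf{I}-\mathbf{P}_{\mathbf{W}_{k}^{t}})\mathbf{P}_{\mathbf{W}_{k}^{*}}\|=\|(\mathbf{I}-\mathbf{P}_{\mathbf{W}_{k}^{t}})\mathbf{A}\mathbf{A}^{\dagger}\|\leq \|(\mathbf{I}-\mathbf{P}_{\mathbf{W}_{k}^{t}})\mathbf{A}\|\,\|\mathbf{A}^{\dagger}\|.
\]
Here I use $\mathbf{A}\mathbf{A}^{\dagger}=\mathbf{P}_{\mathbf{W}_{k}^{*}}$, which holds assuming $\mathbf{W}_{k}^{*}$ has full column rank.

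The second step exploits the fact that $(\mathbf{I}-\mathbf{P}_{\mathbf{W}_{k}^{t}})$ annihilates $\mathbf{W}_{k}^{t}$. Substituting $\mathbf{A}=\mathbf{W}_{k}^{t}-\mathbf{E}$ gives $(\mathbf{I}-\mathbf{P}_{\mathbf{W}_{k}^{t}})\mathbf{A}=-(\mathbf{I}-\mathbf{P}_{\mathbf{W}_{k}^{t}})\mathbf{E}$. Since $\mathbf{I}-\mathbf{P}_{\mathbf{W}_{k}^{t}}$ is an orthogonal projector with norm at most one, its norm is bounded by $\|\mathbf{E}\|<\epsilon_{0}$. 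This is the heart of the argument: the approximating matrix's own column space swallows the unperturbed part, and no perturbation-theoretic (Davis--Kahan/Wedin) machinery is needed.

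The third step bounds $\|\mathbf{A}^{\dagger}\|=1/\sigma_{\min}(\mathbf{W}_{k}^{*}\mathbf{PD}_{k})$. For any unit $\mathbf{x}$, $\|\mathbf{W}_{k}^{*}\mathbf{PD}_{k}\mathbf{x}\|\geq \sigma_{\min}(\mathbf{W}_{k}^{*})\|\mathbf{PD}_{k}\mathbf{x}\|\geq \sigma_{\min}(\mathbf{W}_{k}^{*})\mathbf{D}_{\min}$. This uses that $\mathbf{P}$ is orthogonal and that the diagonal scaling satisfies $\|\mathbf{D}_{k}\mathbf{y}\|\geq \mathbf{D}_{\min}\|\mathbf{y}\|$, where $\mathbf{D}_{\min}$ is the smallest absolute diagonal entry. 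Combining the three steps yields $\|(\mathbf{I}-\mathbf{P}_{\mathbf{W}_{k}^{t}})\mathbf{P}_{\mathbf{W}_{k}^{*}}\|\leq \epsilon_{0}/(\mathbf{D}_{\min}\sigma_{\min}(\mathbf{W}_{k}^{*}))$ for all $t\geq t_{0}$.

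The main obstacle is bookkeeping rather than analysis. We must make explicit the implicit assumptions that $\mathbf{W}_{k}^{*}$ has full column rank $R$ and that $\mathbf{D}_{k}$ is invertible, which is needed for $\mathbf{D}_{\min}>0$ and for the column-space identity. We must also confirm that the norm in the hypothesis is the spectral norm, or at least dominates it. If it is instead the Frobenius norm, the same argument goes through, since the spectral norm is bounded by the Frobenius norm.
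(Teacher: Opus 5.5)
Your proof is correct and follows essentially the same route as the paper. Both arguments use that $(\mathbf{I}-\mathbf{P}_{\mathbf{W}_{k}^{t}})$ annihilates $\mathbf{W}_{k}^{t}$, so only the perturbation $\mathbf{W}_{k}^{t}-\mathbf{W}_{k}^{*}\mathbf{PD}_{k}$ survives, and both use the lower bound $\|\mathbf{W}_{k}^{*}\mathbf{PD}_{k}\omega\|\geq \mathbf{D}_{\min}\sigma_{\min}(\mathbf{W}_{k}^{*})\|\omega\|$; the paper phrases this vector-by-vector over normalized $\mathbf{v}=\mathbf{W}_{k}^{*}\mathbf{PD}\omega/\|\mathbf{W}_{k}^{*}\mathbf{PD}\omega\|$, while you package it in operator form via $\mathbf{P}_{\mathbf{W}_{k}^{*}}=\mathbf{A}\mathbf{A}^{\dagger}$ and $\|\mathbf{A}^{\dagger}\|=1/\sigma_{\min}(\mathbf{A})$, and you also make explicit the full-rank and invertibility assumptions that the paper leaves implicit.
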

\begin{proof}
    Let $\mathbf{v} = \frac{\mathbf{W}_{k}^{*}\mathbf{PD}\mathbf{\omega}}{\|\mathbf{W}_{k}^{*}\mathbf{PD}\omega\|}$ be a normalized vector in the range of $\mathbf{P}_{\mathbf{W}_{k}^{*}}$ for $\omega \in \mathbb{R}^{R}$ then we have the following
    \begin{align*}
    \left\|\left(\mathbf{I}-\mathbf{P}_{\mathbf{W}_{k}^{t}}\right)\mathbf{v}\right\|&=\frac{\|\left(\mathbf{I}-\mathbf{P}_{\mathbf{W}_{k}^{t}}\right)\mathbf{W}_{k}^{*}\mathbf{PD}\mathbf{\omega}\|}{\|\mathbf{W}_{k}^{*}\mathbf{PD\omega\|}}=\frac{\|\left(\mathbf{I}-\mathbf{P}_{\mathbf{W}_{k}^{t}}\right)\left(\mathbf{W}_{k}^{*}\mathbf{DP}\omega-\mathbf{W}_{k}^{t}\omega\right)\|}{\|\mathbf{W}_{k}^{*}\mathbf{PD}\omega\|}\\
    &\leq \frac{\|\left(\mathbf{I}-\mathbf{P}_{\mathbf{W}_{k}^{t}}\right)\left(\mathbf{W}_{k}^{*}\mathbf{PD\omega}-\mathbf{W}_{k}^{t}\omega\right)\|}{\mathbf{D}_{\min}\sigma_{\min}(\mathbf{W}_{k}^{*})\|\mathbf{\omega}\|}\leq \frac{\|\mathbf{I}-\mathbf{P}_{\mathbf{W}_{k}^{t}}\|\|\mathbf{W}_{k}^{t}-\mathbf{W}_{k}^{*}\mathbf{P}\mathbf{D}_{k}\|}{\mathbf{D}_{\min}\sigma_{\min}(\mathbf{W}_{k}^{*})}\\
    &\leq \frac{\|\mathbf{W}_{k}^{t}-\mathbf{W}_{k}^{*}\mathbf{P}\mathbf{D}_{k}\|}{\mathbf{D}_{\min}\sigma_{\min}(\mathbf{W}_{k}^{*})}
    \end{align*}
    As this is true for any vector $\mathbf{v}$ in the range of $\mathbf{P}_{\mathbf{W}_{k}^{*}}$ then for $t\geq t_{0}$, $\|(\mathbf{I}-\mathbf{P}_{\mathbf{F}_{k}^{t}})\mathbf{P}_{\mathbf{W}_{k}}^{*}\|\leq \frac{\epsilon_{0}}{{\mathbf{D}_{\min}\sigma_{\min}(\mathbf{W}_{k}^{*})}}$
\end{proof}

Using a similar approach, as in proposition \ref{columns_space convergence} with
$\mathbf{W}_{k}^{t}$ and $\mathbf{W}_{k}^{*}$ interchanged, we get
\begin{equation}
    \|\mathbf{P}_{\mathbf{W}_{k}^{t}}(\mathbf{I}-\mathbf{P}_{\mathbf{W}_{k}^{*}})\| 
    \leq \frac{\|\mathbf{W}_{k}^{t}-\mathbf{W}_{k}^{*}\mathbf{PD}\|}{\sigma_{\min}(\mathbf{W}_{k}^{t})}\leq \frac{\epsilon_{0}}{\sigma_{min}(\mathbf{W}_{k}^{t})},
\end{equation}
Combining both bounds with the triangle inequality
\begin{align*}
    |\ell_{i}(\mathbf{W}_{k}^{*})-\ell_{i}(\mathbf{W}_{k}^{t})| &\leq \|\mathbf{P}_{\mathbf{W}_{k}^{t}}-\mathbf{P}_{\mathbf{W}_{k}^{*}}\| 
    \\
   & \leq \|\mathbf{P}_{\mathbf{W}_{k}^{t}}(\mathbf{I}-\mathbf{P}_{\mathbf{W}_{k}^{*}})\| 
    + \|(\mathbf{I}-\mathbf{P}_{\mathbf{W}_{k}^{t}})\mathbf{P}_{\mathbf{W}_{k}^{*}}\| 
    \leq \epsilon_{0}\left(\frac{1}{\mathbf{D}_{min}\sigma_{min}\left(\mathbf{W}_{k}^{*}\right)}+\frac{1}{\sigma_{min}\left(\mathbf{W}_{k}^{t}\right)}\right),
\end{align*}

If we start with an initialization close to the exact solution or an incoherent initialization, the value of $\gamma$ will be close to 1; hence, the number of samples needed is proportional to $R\ln(R)$. However, if a coherent matrix is used for initialization, an extra number of samples is required for the first few iterations. Subsequently, the number of samples decreases as the factor matrices converge toward the ground-truth, as described in proposition \ref{columns_space convergence}.
\subsection{Theoretical analysis for Hybrid Sampling}

The hybrid sampling technique which was also used by Larsen et al. in \cite{larsen2022practical} to sample the least square problem, and they showed that it  outperforms the standard sampling technique.
The sampling matrix is defined as follows, $\mathbf{S}_{H}\in \mathbb{R}^{s\times m}$
\[\mathbf{S_{H}}=\begin{bmatrix}
    \bf{S_{D}} & \\
     & \bf{S_{R}}
\end{bmatrix}\]
where $\mathbf{S_{D}}\in \mathbb{R}^{\alpha\times \alpha}$ is a deterministic sampling matrix with one nonzero entry per row corresponding to the $\alpha$ sampled indices $\mathbf{d_s}$, and $\mathbf{S_{R}}\in \mathbb{R}^{s_{R}\times (m-\alpha)}$ is a scaled leverage score sampling matrix for the remaining indices $\bf{r}$. 
Theoretical guarantees for this hybrid sampling were provided in \cite{hayashi2025randomized}, showing that if the $\alpha$ rows were selected deterministically according to the approximate leverage scores of $\mathbf{W}$, then taking additional $\xi \phi$ random samples according to the remaining leverages scores ensures the two structural conditions [\eqref{structural_cond_1},\eqref{structural_cond_2}] where $\xi=R-\sum_{i\in \bf{d_s}}l_{i}(\bf{W})$ and  $\phi=\max\left(\frac{C}{\beta}\ln(2R/\delta),\frac{4}{\beta \delta \epsilon}\right)$ with $C = \frac{144}{(1-\sqrt{2})^{2}}$.
By leveraging these results in our analysis, we can conclude that if $\alpha$ rows were selected deterministically according to the approximate leverage scores of $\bf{W^{*}}$, and an additional $\xi \phi$ rows are sampled according to the remaining leverage scores, the same structural conditions are satisfied for $\xi=R-\sum_{i\in \bf{d_{s}}}l_{i}(\bf{W^{*}})$ and  $\phi=\gamma\max\left(\frac{C}{\beta}\ln(2R/\delta),\frac{4}{\beta\epsilon\delta}\right)$ with $C = \frac{144}{\epsilon_{0}^{2}}$ .
To put everything together in our method, we first note that, according to the discussion in Section~\ref{QR_sampling_section}, if \( R \leq n_k \), then the pivots obtained by SE-QRCS on \( \mathbf{T}_k^{T} \) are close to those obtained deterministically via the highest leverage scores of \( \mathbf{W}^{*}_{k} \).
Second, we assume that after selecting the \( \alpha_k \) indices using SE-QRCS (where \( \alpha_k \) is defined in line 6 of Algorithm~\ref{Algorithm1}), the remaining rows are likely incoherent and have small leverage scores. Therefore, we sample the remaining \( s_R \) rows uniformly at random.
Finally, in our experiments, we use the same sampling matrix for a fixed \( k \) across ALS iterations, as this approach performed well empirically and we found no need to resample at every iteration.

%% file: Computational_Experiments.tex
\section{Numerical Experiments}
\label{sec:comp}
The numerical experiments in this section make use of the ITensorCPD.jl\cite{ITensorCPD.jl} library developed in the Julia programming language.
The ITensorCPD.jl library is a flexible library developed to extend the automatic einsum features of the ITensors.jl\cite{ITensor} library to support tensor networks with hyper-rank structure such as the CPD.
The library supports the decomposition of tensors and tensor networks on CPU and GPU and we are currently working to develop methods to support homogeneous/heterogeneous distributed CPU/GPU architectures. 
The computational experiments were run on a Mac M1 Max equipped with 8 high-throughput CPU cores each with a 3.2GHz processing speed and 64 GB of unified RAM. In all the experiments, the sparse embedding used in the SE-QRCS step has a sketching dimension $\lceil3n_{k}\log(n_{k})\rceil$ and sparsity parameter of $\lceil\log(n_{k})\rceil$. Scripts to reproduce the numerical results are available at \url{https://github.com/Israafkh/ITensorCPD-Experiments.git}

\subsection{Synthetic Data} 
In this work, we consider synthetic tensors constructed using two different methods. 
For our first experiment, we construct the synthetic tensor using the method laid out by  Battaglino et. al.\cite{battaglino2018practical}.
With this method, we create a tensor based on known randomly generated weights $\lambda$ and factor matrices $A^{(n)}$. 
We impose a certain collinearity C on the factor matrices, meaning that each pair of column vectors from the same factor matrices satisfies
\[C = \frac{(a_{r}^{(n)})^{T}a_{s}^{(n)}}{\|a_{r}^{(n)}\|\|a_{S}^{(n)}\|}\]
The weight vector $\lambda$ is generated randomly in $[0.2,0.8]$. After construction of the tensor $\mathcal{T_{\text{true}}}$ using the resulting factor matrices and weights, a noise tensor $\mathcal{N}$, with entries drawn from the normal distribution, is added to get the final tensor
\[\mathcal{T}=\mathcal{T_{\text{true}}}+\nu \frac{\|\mathcal{T_{\text{true}}}\|}{\|\mathcal{N}\|}\mathcal{N}\]
with $\nu$ the noise parameter.

For this first experiment, we create a synthetic order $3$ tensor $\mathcal{T}$ with dimension $400$ along each mode. The collinearity and noise parameters are chosen to be $\{0.5,0.9\}$ and $\{0.01,0.1\}$ respectively. The actual and chosen ranks are 5. In the SE-QRCS procedure, a rank of 5 is used in the sketched matrix resulting in a column space of about 700. In the SE-QRCS-based ALS procedure $80$ columns are sampled and the algorithm terminates when the change in the angle between two CPD approximated tensors is less than $10^{-5}$ or the number of iterations exceeds $200$.
The angle between two CPD approximated tensors is computed efficiently by leveraging the Khatri-Rao product structure of the CPD approximation.
This convergence criteria is chosen because the method does not require the exact or approximate norm of the target tensor, $\mathcal{T}$.
After the ALS procedure is complete we compute the fit between the CPD approximated tensor and the target tensor.

\begin{table}[H]
\centering
\begin{tabular}{|c|c|c|c|c|c|}
\hline
  Tests & $C$ & $\nu$ & Fit, ALS & Fit, SE-QRCS-ALS & Columns in $A_{\text{sub}}$ \\ \hline
1  & 0.5 & 0.01  & 0.9899 & 0.9888 & 693    \\ \hline
2  & 0.9 & 0.01  & 0.9894 & 0.9667 & 671  \\ \hline
3  & 0.5 & 0.1   & 0.9005 & 0.8963 & 689  \\ \hline
4  & 0.9 & 0.1   & 0.9003 & 0.8965 & 712  \\ \hline
\end{tabular}
\caption{Results for the Synthetic Tensor. The number of Columns in  $A_{\text{sub}}$ is the mean over the three dimensions.}
\label{ }
\end{table}
In our second experiment, we attempt to compare the 
SE-QRCS and leverage score-based sampled CPD-ALS algorithms.
To compare these methods, we create a synthetic order-3 tensor with dimension $90\times90\times90$.
We construct this synthetic tensor by generating a random rank $100$ CPD approximation where factors are chosen from a uniform distribution between $[-1,1]$.
We recognize that constructing a random tensor in this way introduces little variance in the leverage scores of the tensor.
Therefore, to make the problem more difficult/realistic, the synthetic tensor is modified by amplifying the norm of the first $40$ columns in the mode-1 matricization of the tensor.
These columns are then randomly permuted to ensure that the scores are not coincidentally discovered by their clustered ordering.
\cref{fig:modified synthetic} shows decomposition of the synthetic tensor to three different ranks $90,100, \text{ and } 110$.
One can see from these plots the failure of the leverage score sampling method. 
Because the factor matrices are approximately square,  the leverage scores of each factor matrix provides no discernible information about which columns of the KRP are of importance.
In this case the leverage score method is effectively uniformly sampling the KRP.
To make up for this deficiency, the leverage score sampling method must significantly oversample the problem.
However, the SE-QRCS method is able to find the most important columns of the KRP and, as a result, we are able to decompose the target tensor with significantly fewer sampled rows.

\begin{figure}[H]
    \centering
    \begin{subfigure}[b]{0.32\textwidth}
        \includegraphics[width=\textwidth]{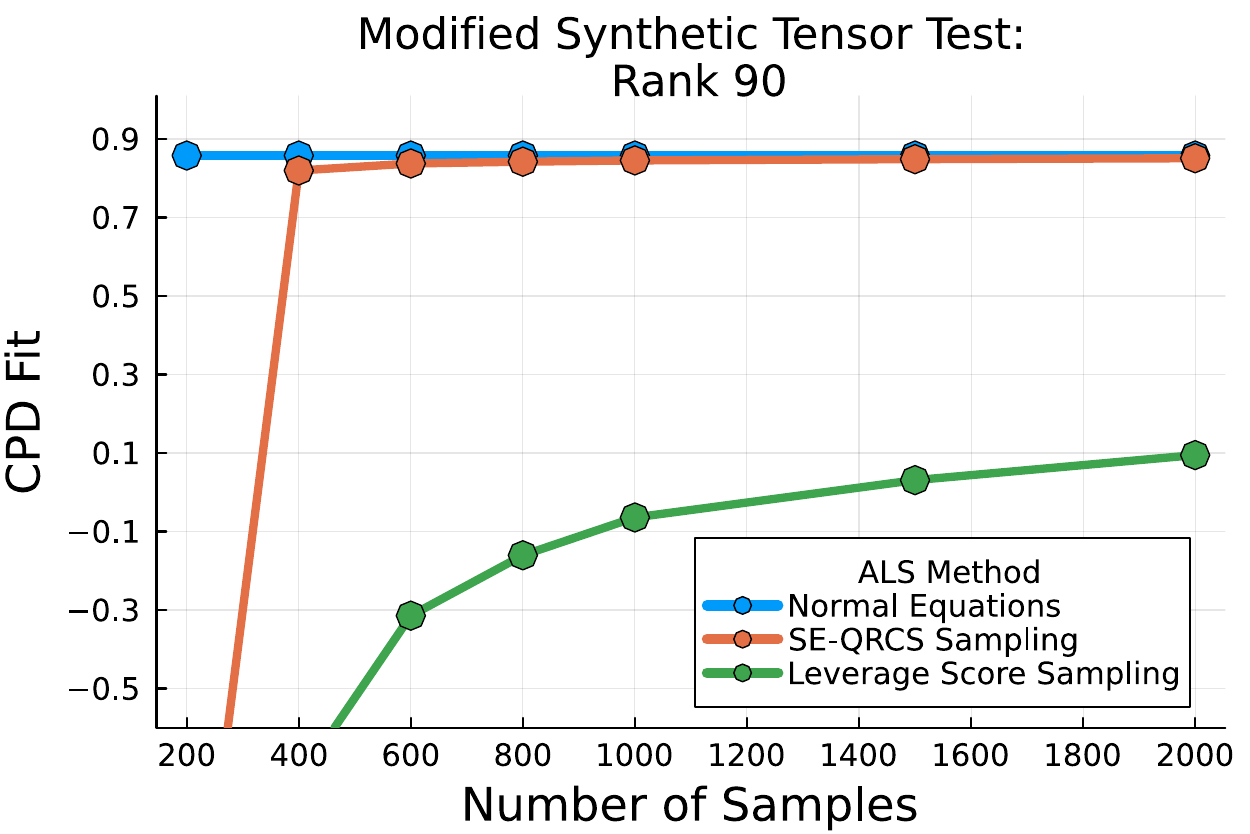}
        \caption{CPD with rank $90$}
    \end{subfigure}
    \hfill
    \begin{subfigure}[b]{0.32\textwidth}
        \includegraphics[width=\textwidth]{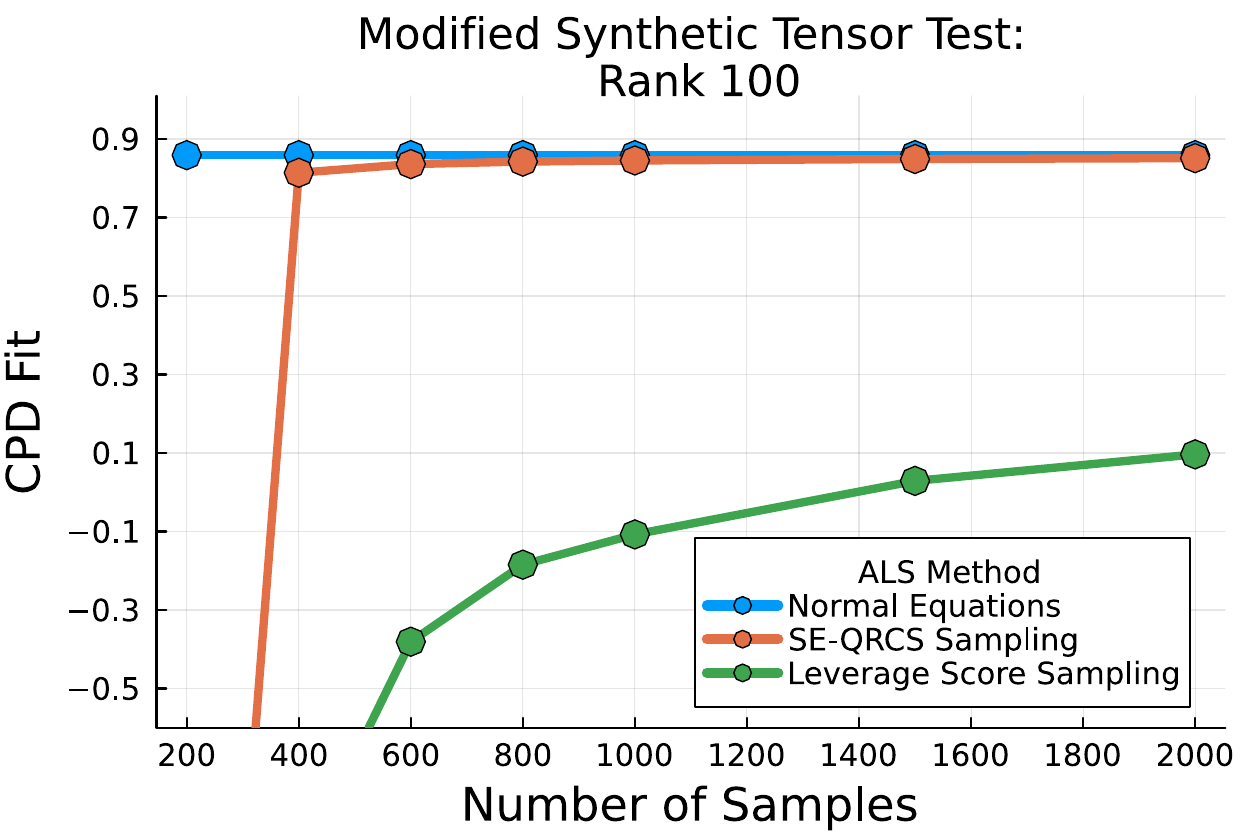}
        \caption{CPD  with rank $100$}
    \end{subfigure}
    \hfill
    \begin{subfigure}[b]{0.32\textwidth}
        \includegraphics[width=\textwidth]{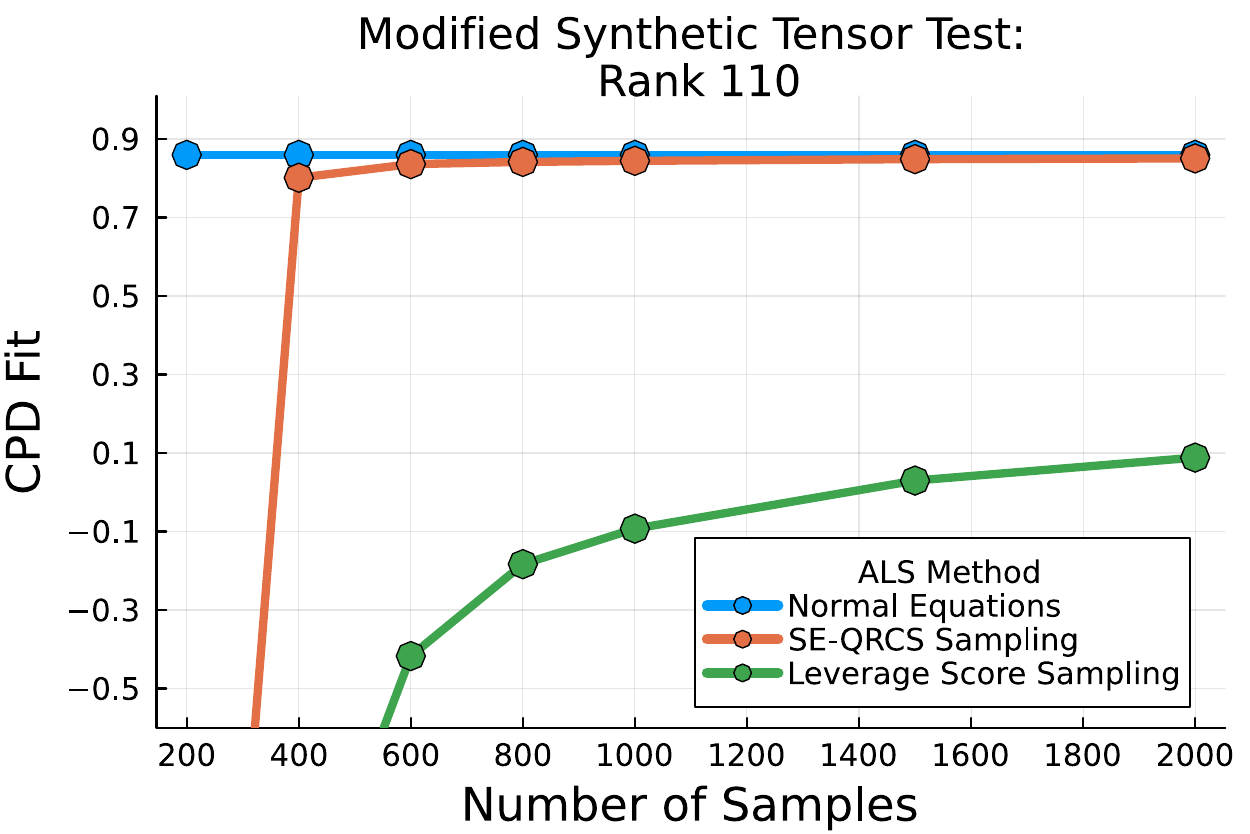}
        \caption{CPD with rank $110$}
    \end{subfigure}
    \caption{Results for modified synthetic tensor}
    \label{fig:modified synthetic}
\end{figure}

\subsection{Real Data} 
We are particularly interested in leveraging this method to decompose physically motivated higher-order tensors.
One of interest is the three-center, two-electron coulomb integral tensor, an important tensor for simulations in quantum chemistry and physics.
Tensor elements can be represented as
\begin{align}\label{eq:b}
    B_{pqX} = \iint \phi^*_{p}(r_1) \phi_q(r_1) g(r_1, r_2)  \psi_X(r_2) dr_1 dr_2
\end{align}
where $g(r_1, r_2)$ is the positive definite kernel $|r_1 - r_2|^{-1}$ and $\phi, \psi$ are single particle functions comprised of products of gaussian functions.
In \cref{eq:b}, there are two basis of one electron function. 
The first basis is denoted by $\phi$ and is comprised of $I_\mu$ single particle functions. 
The dimension of $I_{\mu} \approx 10 * \text{N}_{\text{elec}}$, where $\text{N}_{\text{elec}}$ is the number of electrons in a given chemical system.
The second basis is denoted by $\psi$ and is comprised of $I_{\text{aux}}$ single particle functions.
The dimension of $I_{aux} \approx 3 * I_{\mu}$.
Therefore, the tensor $\mathcal{B} \in \mathbb{R}^{I_\mu \times I_\mu \times I_{\text{aux}}}$ grows very quickly with chemical system size and becomes infeasible to store.
Investigations into the use of the CPD to reduce the storage complexity of this tensor \cite{VRG:benedikt:2011:JCP,VRG:hohenstein:2012:JCP, Pierce:2025:JCTC:MP2,Pierce:2025:JCTC:CPB} have recently been of interest to chemists and physicists.
These studies typically assume the rank of the tensor $\mathcal{B}$ to be between $I_\text{aux}$ and $5 I_\text{aux}$, depending on the chemical system size and necessary accuracy.
In \cref{fig:B}, we decompose the three-center two-electron integral for the 10-water cluster in the TIP4P geometry\cite{jorgensen:1983:JCP,Wales:1998:CPL} using common Gaussian basis sets (the cc-pVTZ orbital basis set and cc-pVTZ-RI auxiliary basis set \cite{Dunning1989,Kendall1992}) where $I_\mu = 540$ and $I_\text{aux} = 1410$, making the $\mathcal{B}$ tensor about 3.3 GB.
We decompose $\mathcal{B}$ to two different CP ranks $R = \{I_\text{aux},2I_\text{aux}\}$ using the SE-QRCS and leverage score sampled CPD-ALS.
In this study, we forgo the canonical CPD-ALS optimization because of the methods high computational costs in time and memory.
For the SE-QRCS decomposition, 200 columns were sampled from the sketched matrix resulting in a column space of roughly $(85,000$, $85,000$, $16,000)$.
These values correspond to roughly 10 percent of the columns in the first and second mode matricization and 5 percent of the columns in the third mode matricization.
Because the rank of the CPD is large, the leverage score CPD-ALS strategy is, again, effectively a uniform sampler of the KRP for each LS subproblem.
From \cref{fig:B} one can see that, when fewer than a critical number of samples is used (around $4 * 10^4$) the SE-QRCS algorithm outperforms the naive uniform sampling method. 
However, when the number of samples surpasses the threshold the SE-QRCS method is roughly equivalent to the leverage score method.
We believe that this phenomenon is associated with the difficulty of this problem, i.e. the $\mathcal{B}$ tensor has both a relatively large CP rank and a large number of important rows in the KRP.
Here we define large as a number on the order of or greater than the dimension of modes on the target tensor.
Although the methods find a relatively equivalent CPD fit in this regime, it's important to remember that the SE-QRCS method does not require the resampling of the target tensor during the ALS optimization procedure.
This, therefore, leads to computational benefits in performance and storage over the leverage score method.
\begin{figure}[t]
    \centering
    \begin{subfigure}[b]{0.49\textwidth}
        \includegraphics[width=\textwidth]{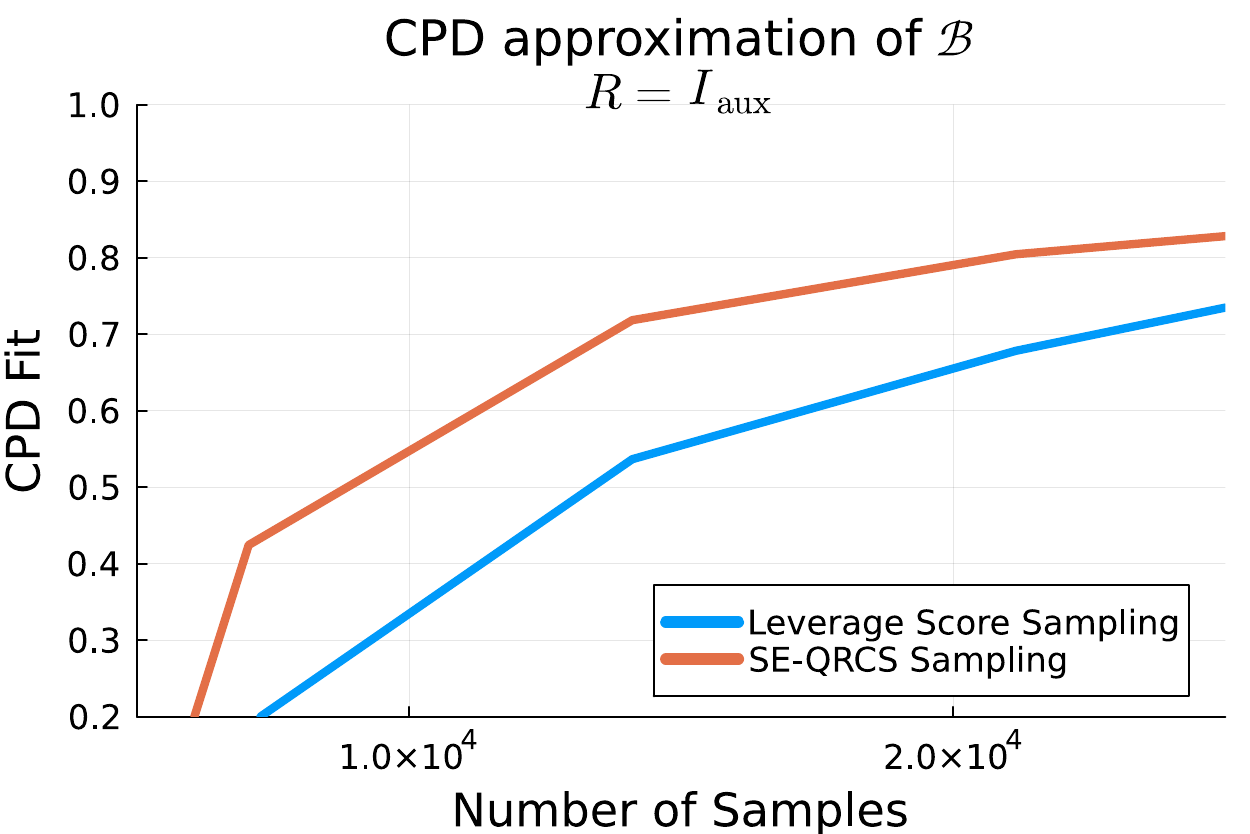}
        \caption{CPD with rank $I_\text{aux}$}
    \end{subfigure}
    \hfill
    \begin{subfigure}[b]{0.49\textwidth}
        \includegraphics[width=\textwidth]{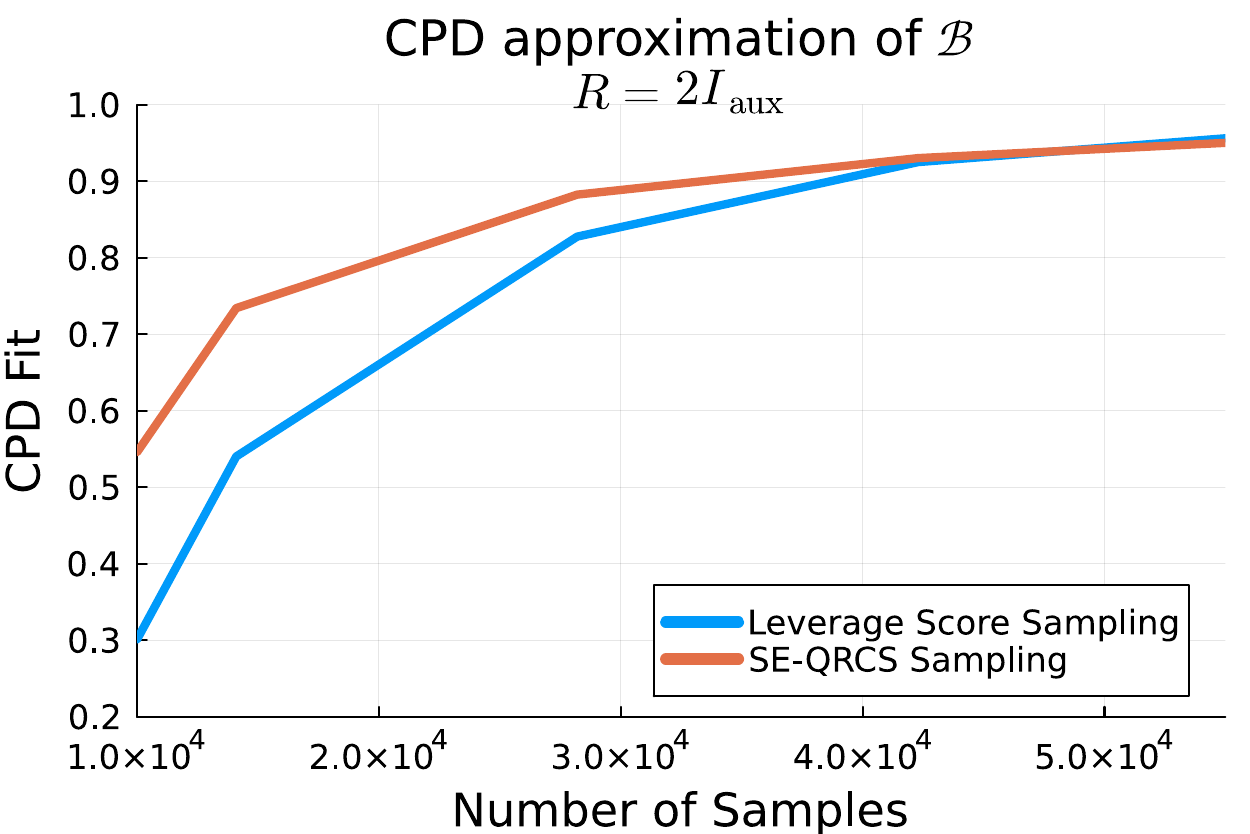}
        \caption{CPD  with rank $2 I_\text{aux}$}
    \end{subfigure}
    \hfill
    \caption{Results for sampled CPD-ALS approximation of the three-center two-electron integral tensor $\mathcal{B}$ for a 10-water cluster using CP rank (a) $R= I_\mathrm{aux}$ and (b) $R= 2 I_\mathrm{aux}$}
    \label{fig:B}
\end{figure}

Next we consider the cost of the SE-QRCS and leverage score sampled CPD-ALS optimizations in \cref{fig:Btime}. 
In this figure, we split the cost of the SE-QRCS methods into the two aforementioned components, the computation of the SE-QRCS of the target tensor and the ALS optimization.
One can see that that the SE-QRCS ALS optimization is consistently faster than the leverage score ALS algorithm.
Given the similarity of these two methods, the overhead of the leverage score method must come from, primarily, resampling the target tensor. 
One can see from \cref{fig:Btime} that the cost associated with computing the SE-QRCS is non-trivial.
However, it is important to note that the SE-QRCS decomposition must only be computed a single time, does not depend on the CPD and does not need to be recomputed if either CPD changes or the number of samples is changed.
To efficiently resample the target tensor, we cache the position of the sorted pivots determined from the SE-QRCS algorithm (holding only $n_k$ for each LS subproblem) and uniformly sample from the column positions not included in this list.
\begin{figure}[t]
    \centering
    \begin{subfigure}[b]{0.49\textwidth}
        \includegraphics[width=\textwidth]{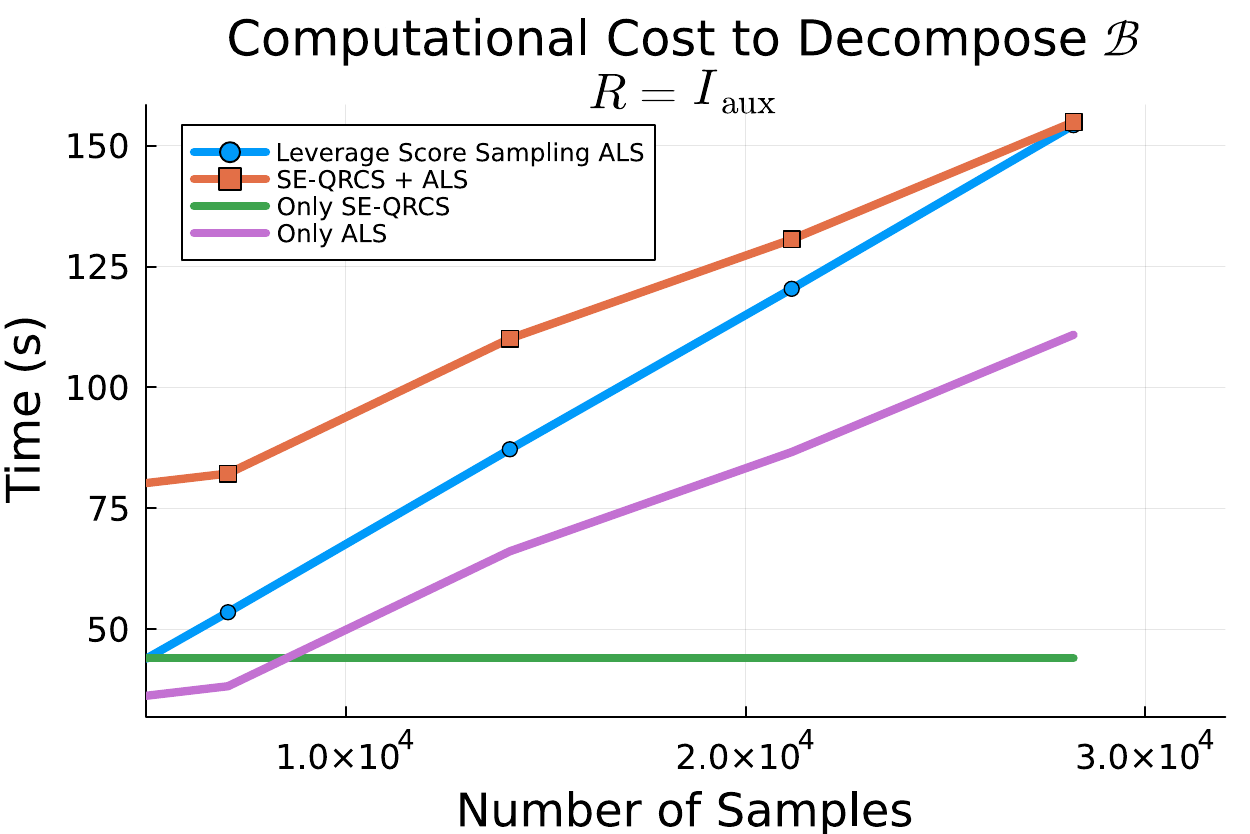}
        \caption{CPD with rank $I_\text{aux}$}
        \label{fig:btime1}
    \end{subfigure}
    \hfill
    \begin{subfigure}[b]{0.49\textwidth}
        \includegraphics[width=\textwidth]{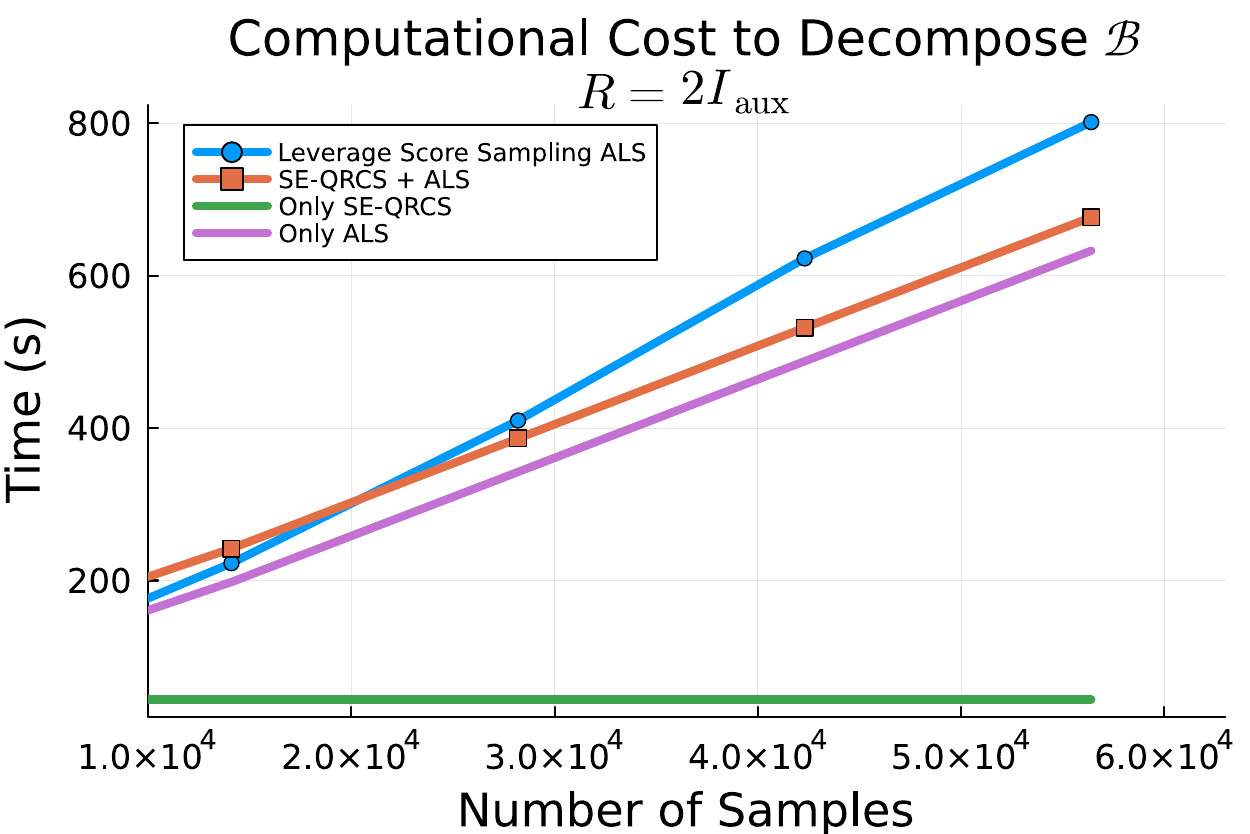}
        \caption{CPD with rank $2I_\text{aux}$}
        \label{fig:btime3}
    \end{subfigure}
    \caption{Wall-clock timing data for the SE-QRCS and leverage score Sampled ALS optimization procedure. (a) decomposes the order three tensor $\mathcal{B}$ is decomposed to a rank $I_\mathrm{aux}$. (b) decomposes $\mathcal{B}$ to a rank of 2$I_\mathrm{aux}$.}
    \label{fig:Btime}
\end{figure}

In \cref{fig:btime1}, i.e. the low-rank regime, when considering the cost of both the SE-QRCS and the CPD-ALS, the method is more expensive than the leverage score-based CPD-ALS method. 
However, one can see that the cost of the leverage score-based CPD-ALS outpaces the cost of the SE-QRCS based CPD-ALS when the number of samples increases.
In \cref{fig:btime3} we study the cost of the decomposition when the rank is increased to $2 I_\text{aux}$.
The target tensor sampling algorithm implicitly depends on the CP rank because the number of necessary samples is proportional to the rank for both the SE-QRCS and leverage score-based CPD-ALS algorithms.
This result demonstrates that, because the SE-QRCS method does not require resampling from the target tensor, the SE-QRCS-based CPD-ALS is faster than the leverage score-based method. 


%% file: Conclusion.tex
\section{Conclusion and Future}
In this work we present a (randomized) QR-based method to determine leverage score-based samples for the sampled CPD-ALS optimization.
By taking advantage of the relationship of the sRRQR and leverages scores to DPP and volume sampling, we are able to efficiently determine statistically important columns of the true (unknown) KRP for each subproblem of the ALS.
The goal of these QR algorithms is to determine the best set of leverage scores for each ALS subproblem before starting the ALS procedure.
Therefore, this work minimizes the number of times the target tensor must be accessed and effectively removes the requirement of storing the full target tensor in the optimization of the CP factor matrices.
Practically, computing the QR of the target tensor has a cost that scales exponentially with the tensor order.
However, because the matricization of a higher-order tensor along a single mode generates a rectangular matrix we may take advantage of randomized linear algebra strategies.
We use the SE-QRCS method to efficiently compute a randomized column-wise pivoted QR.
and show that the number of samples required to create a $(1\pm\epsilon)$ embedding of the ALS least squares problem is $s=\frac{\gamma R}{\beta} \max\left(\frac{4}{\delta \epsilon},\frac{144\ln(2R/\delta)}{\epsilon_{0}^{2}}\right)$, where $\gamma$ captures the coherence between the Khatri-Rao product (KRP) and the matricized tensor.
Furthermore, we illustrate that our SE-QRCS-based ALS algorithm performs equally well or better than the current state of the art, leverage score-based CPD-ALS algorithm.

We understand that there are still a couple of limitations to the SE-QRCS CPD-ALS method.
The first limitation is in the ability of the pivoted QR methods to accurately sort the positions of {\it all} large leverage scores in the correct, descending, order.
Because the QR is a greedy algorithm, we find that for very large tensors and for tensors with large CP rank (i.e. when CP rank is equal to or greater than the dimension of the larger tensor mode) the QR can fail to find the position of some large leverage scores.
As a means to capture these missing values, in this work we take advantage of a hybrid, deterministic/random sampling scheme.
However, we are currently investigating methods to improve the deterministic sampling procedure to find all large valued leverage scores of the KRP.
Our second limitation is related to the current memory requirements of the SE-QRCS algorithm. 
At this time, the SE-QRCS decomposition requires the full target tensor be stored in memory.
As this is a serious limitation for large and high-dimensional data, we are working on a matrix-free implementation to efficiently decompose tensors that either do not fit into disk memory or require large amounts of computing resources to generate elements.
We expect to introduce the necessary modifications to address this memory limitation in near future releases of the ITensorCPD.jl library.
Furthermore, we plan to extend this work to the CPD-ALS of large, high-dimensional tensors, streaming tensors and high-dimensional functions found in physical modeling applications.